\newcounter{stepctr}
{\end{list}}
\newtheorem{thm}{Theorem}[section]
\newtheorem{prop}[thm]{Proposition}
\newtheorem{cor}[thm]{Corollary}
\theoremstyle{definition}
\newtheorem{dfn}[thm]{Definition}
\newtheorem{rema}[thm]{Remark}
\newtheorem{prob*}{Open problem}
\newcommand{\demo}{\begin{proof}}
\newcommand{\A}{\mathcal{A}}
\newcommand{\R}{\ensuremath{\mathcal{R}}}
\newcommand{\N}{\mathbb{N}}
\newcommand{\C}{\mathbb{C}}
\def\ll^2{{\mathcal L}(\ell^2(\N))}
\def\f^0x{{\mathcal F^0}(X) }
\title
{\bf  Almost  invertible operators}
\author{Zakariae Aznay, Abdelmalek  Ouahab,  Hassan  Zariouh}
\date{}
\begin{document}

\maketitle \thispagestyle{empty}

\begin{abstract} \baselineskip=10pt
\noindent We prove that  a bounded  linear operator $T$ is  a direct sum of an invertible operator and   an operator with at most countable spectrum  iff $0\notin\mbox{acc}^{\omega_{1}}\,\sigma(T),$ where $\omega_{1}$  is  the smallest uncountable ordinal and  $\mbox{acc}^{\omega_{1}}\,\sigma(T)$ is the $\omega_{1}$-th Cantor-Bendixson derivative  of $\sigma(T).$
\end{abstract}

 \baselineskip=15pt
 \footnotetext{\small    2020 AMS subject
classification: Primary    03Exx; 47A10; 47Axx\\
  Keywords:  Cantor-Bendixson derivative, $g_{\sigma_{*}}^{\alpha}$-invertible, almost invertible, at most countable spectrum, accumulation points.} \baselineskip=15pt
\section{Introduction}
  Let  $L(X)$ be  the set of all bounded linear operators acting on   a complex Banach space $X$  and denote, respectively,  the spectrum, the Browder spectrum and the Drazin spectrum on $L(X)$  by   $\mathcal{\sigma},$ $\sigma_{b}$ and $\sigma_{d},$ see \cite{aznay-ouahab-zariouh6} for more details. Let $T \in L(X).$ A subset $\tilde{\sigma}\subset\sigma(T)$ is called a spectral set (or isolated part) of $T$ if it  is clopen in $\sigma(T).$ A subspace $M$  is called $T$-invariant if $T(M) \subset M,$  in this case  the restriction of $T$ on $M$  is denoted by $T_{M}.$ Denote further by $(M,N)\in \mbox{Red}(T)$  when $M,N$ are closed $T$-invariant subspaces and $X=M\oplus N,$ and denote by: 
$$
    \begin{array}{ll}
   \overline{A} &: \text{  the closure of a   complex subset } A\\
   A^C &: \text{ the complementary of a complex subset } A\\
   C(\lambda, \epsilon) &: \text{ the  circle  ball of radius } \epsilon  \text{ centered at } \lambda \\
   B(\lambda, \epsilon) &: \text{ the  open ball of radius } \epsilon  \text{ centered at } \lambda \\
   D(\lambda, \epsilon) &: \text{ the  closed  ball of radius } \epsilon  \text{ centered at } \lambda\\
    \end{array}
$$
For ordinal numbers $\alpha,$ the $\alpha$-th Cantor-Bendixson derivative of  $A\subset\C$ is defined by repeatedly applying the derived set operation using transfinite recursion as follows:
$$ \begin{cases}
       \mbox{acc}^{0}\,A=A\\
       \mbox{acc}^{\alpha}\,A=\mbox{acc}^{\alpha-1}\,(\mbox{acc}\,A) \text{ if  $\alpha$ is a successor   ordinal}\\
       \displaystyle \mbox{acc}^{\alpha}\,A=\bigcap_{\beta<\alpha}\mbox{acc}^{\alpha}\,A \text{ if  $\alpha$  is a limit ordinal}
       \end{cases}
$$
where $\mbox{acc}\,A$ is the set of all acumulation points of $A,$ and $\alpha-1$ denotes the  predecessor of  $\alpha$ if it is a successor.  The Cantor-Bendixson rank of the set  $A,$ denoted $\mbox{CBR}(A),$ is the smallest ordinal $\alpha$ satisfying   $\mbox{acc}^{\alpha}\,A=  \mbox{acc}^{\alpha+1}\,A.$  Recall that a separable  topological space $X$   is called Polish if it is   completely metrizable.    Let  $\sigma_{*}\in\{\sigma,\sigma_{b},\sigma_{d}\}.$ For every $T \in L(X),$ $\sigma_{*}(T)$ is Polish. This entails from \cite[Theorem 4.9]{kechrisbbook} that  $\mbox{CBR}(\sigma_{*}(T))<\omega_{1},$ where $\omega_{1}$  is  the second number class (i.e. the first uncountable ordinal).    It is clear that for  $\lambda:=\mbox{CBR}(\sigma_{*}(T)),$ $(\mbox{acc}^{\alpha}\,\sigma_{*}(T))_{\alpha\leq \lambda}$   is a strictly decreasing   family of compact subsets. Moreover, $$\displaystyle\mbox{acc}^{\beta}\,\sigma_{*}(T)\setminus\mbox{acc}^{\alpha}\,\sigma_{*}(T)= \bigsqcup_{\beta\leq\gamma<\alpha}\mbox{iso}\,(\mbox{acc}^{\gamma}\,\sigma_{*}(T)) \text{ for every ordinals } \alpha,\beta \text{ such that } \alpha>\beta,$$ where $\bigsqcup$ denotes the mutually disjoint union. Therefore 
$$\sigma_{*}(T)=\mbox{acc}^{\alpha}\,\sigma_{*}(T)\bigsqcup[\bigsqcup_{\beta<\alpha}\mbox{iso}\,(\mbox{acc}^{\beta}\,\sigma_{*}(T))] \text{ for all ordinals } \alpha.$$ As  $\mbox{acc}^{\alpha+1}\,\sigma(T)\subset\mbox{acc}^{\alpha}\,\sigma_{d}(T)\subset\mbox{acc}^{\alpha}\,\sigma_{b}(T)\subset\mbox{acc}^{\alpha}\,\sigma(T)$ for all ordinals  $\alpha,$  then  $\mbox{acc}^{\alpha}\,\sigma(T)=\mbox{acc}^{\alpha}\,\sigma_{b}(T)=\mbox{acc}^{\alpha}\,\sigma_{d}(T)$ for all  ordinals  $\alpha\geq \omega,$ where $\omega$ is the first infinite   ordinal. Thus  $\mbox{CBR}(\sigma(T))=\mbox{CBR}(\sigma_{b}(T))=\mbox{CBR}(\sigma_{d}(T))$  if $\mbox{CBR}(\sigma_{d}(T))\geq \omega,$ and if  $\mbox{CBR}(\sigma_{d}(T))< \omega,$  then   $$\mbox{CBR}(\sigma(T))-1\leq\mbox{CBR}(\sigma_{d}(T))\leq\mbox{CBR}(\sigma_{b}(T))\leq \mbox{CBR}(\sigma(T))\leq \mbox{CBR}(\sigma_{b}(T))+1.$$     
We   deduce  also that 
$$\sigma_{*}(T)=\mbox{acc}^{\alpha}\,\sigma(T)\bigsqcup[\bigsqcup_{\beta<\alpha}\mbox{iso}\,(\mbox{acc}^{\beta}\,\sigma_{*}(T))] \text{ if } \alpha\geq\omega.$$
   For $T \in L(X),$ it is proved in \cite{abad-zguiti,berkanimerom,koliha} that   $0\notin  \mbox{acc}\,\sigma_{*}(T)$ iff   $T=D\oplus Q,$  $D$ is an invertible operator  and $Q$ is an operator with $\sigma_{*}(Q)\subset\{0\}.$ Furthermore, we  showed   in \cite{aznay-ouahab-zariouh6} that $0\notin  \mbox{acc}^{2}\,\sigma(T)$ iff   $T=D\oplus Q,$  $D$ is invertible and $\mbox{acc}\,\sigma(Q)\subset\{0\}.$ In the present  work, we   generalize  these results by proving that   for each   successor   ordinal $\alpha,$ $0\notin  \mbox{acc}^{\alpha}\,\sigma_{*}(T)$ iff   $T=D\oplus Q,$  $D$ is invertible and $\mbox{acc}^{\alpha-1}\,\sigma_{*}(Q)\subset\{0\}.$  This  enables  us to characterize  the class of operators having  a direct decomposition  of an invertible operator  and  an operator with   at most countable  spectrum. More precisely, we prove that  $T$ is an  almost invertible operator iff $0\notin \displaystyle\mbox{acc}^{\omega_{1}}\,\sigma(T).$   Among other things, we show that if  $T$ has a topological uniform descent, then $T$ is Drazin invertible iff it  is an  almost invertible operator. 
\section{$g_{\sigma_{*}}^{\alpha}$-invertible operators}
Following    \cite{aznay-ouahab-zariouh6},  an operator $T\in L(X)$ is called zeroloid if $\mbox{acc}\,\sigma(T)\subset\{0\}.$ The following proposition  shows that  the product of commuting zeroloid operators is also  zeroloid. 
\begin{prop} Let  $T,S \in L(X)$  such that $TS=ST.$ The following assertions hold:\\
(i) If both $T$ and $S$ are   zeroloid, then $TS$ is zeroloid.\\
(ii)  If $0\in \mbox{acc}\,\sigma(TS),$ then $0\in \mbox{acc}\,\sigma(T)\cup \mbox{acc}\,\sigma(S).$
\end{prop}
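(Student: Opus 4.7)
The strategy is to exploit the classical spectral inclusion for commuting operators,
$$\sigma(TS)\subset\sigma(T)\sigma(S):=\{ts\,:\,t\in\sigma(T),\,s\in\sigma(S)\},$$
which follows by embedding $T$ and $S$ in a maximal commutative Banach subalgebra of $L(X)$ and applying Gelfand's theorem: every character is multiplicative, so $\chi(TS)=\chi(T)\chi(S)$, and in a commutative unital Banach algebra the spectrum of an element is the range of its Gelfand transform. Everything else reduces to compactness and extraction of subsequences.

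To prove (ii), I would pick a sequence $\lambda_{n}\in\sigma(TS)\setminus\{0\}$ with $\lambda_{n}\to 0$ and, using the inclusion, write $\lambda_{n}=t_{n}s_{n}$ with $t_{n}\in\sigma(T)$, $s_{n}\in\sigma(S)$; both factors must be nonzero because $\lambda_{n}\neq 0$. Compactness of the spectra yields a subsequence with $t_{n}\to t$ and $s_{n}\to s$, forcing $ts=\lim\lambda_{n}=0$. Whichever factor vanishes produces $0$ as an accumulation point of the corresponding spectrum: if $t=0$, the nonzero sequence $(t_{n})\subset\sigma(T)\setminus\{0\}$ tends to $0$, so $0\in\mbox{acc}\,\sigma(T)$, and symmetrically for $s=0$.

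For (i), assume both $T$ and $S$ are zeroloid. Part (ii) shows that an accumulation of $\sigma(TS)$ at $0$ is consistent with the zeroloid conclusion, so I only need to exclude nonzero accumulation points of $\sigma(TS)$. Suppose for contradiction that $\lambda\in\mbox{acc}\,\sigma(TS)$ with $\lambda\neq 0$, and choose $(\lambda_{n})\subset\sigma(TS)\setminus\{\lambda\}$ with $\lambda_{n}\to\lambda$. Writing $\lambda_{n}=t_{n}s_{n}$ and passing to convergent subsequences $t_{n}\to t$, $s_{n}\to s$ gives $ts=\lambda\neq 0$, hence $t,s\neq 0$. The zeroloid hypothesis $\mbox{acc}\,\sigma(T)\subset\{0\}$ forces $t$ to be isolated in $\sigma(T)$, and likewise $s$ in $\sigma(S)$; so $t_{n}=t$ and $s_{n}=s$ eventually, giving $\lambda_{n}=\lambda$ eventually, which contradicts $\lambda_{n}\neq\lambda$.

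The only nonroutine ingredient is the spectral inclusion $\sigma(TS)\subset\sigma(T)\sigma(S)$, which can be quoted from any standard Banach-algebra reference; if one wished to avoid it, one could directly exhibit a functional-calculus inverse of $\lambda I-TS$ when $\lambda\notin\sigma(T)\sigma(S)$, but this would just reprove the standard fact. Once the inclusion is available, both assertions follow from the same compactness argument, with the zeroloid hypothesis in (i) used precisely to force eventual constancy of the spectral sequences $(t_{n})$ and $(s_{n})$, ruling out a nonzero accumulation point.
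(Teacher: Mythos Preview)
Your proof is correct and follows essentially the same idea as the paper: both arguments rest on Gelfand theory for a commutative unital Banach subalgebra containing $T$ and $S$, using that characters are multiplicative so that $\sigma(TS)\subset\sigma(T)\sigma(S)$. The paper invokes multiplicative functionals directly and shows $B(0,\epsilon)^{C}\cap\sigma(TS)$ is finite for every $\epsilon>0$, while you package the same fact as the product inclusion and run a sequential compactness argument; the substance is identical.
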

\begin{proof}    Denote by $\A=\{T,S\}^{''}$ the second commutant of  $\{T,S\}$ (see \cite[Definition I.1.23]{mullerbook} for the definition). It is well known that $\A$  is a unital commutative Banach algebra, and if $\sigma_{\A}(T)$ is the spectrum   of $T$ in  $\A,$  then  $\sigma_{\A}(T)=\sigma(T).$ \\
(i)  Assume that $T,S$ are  non-zero.     As  $T$ and $S$ are zeroloid then $B(0,\epsilon)^{C}\cap\sigma(T)$ and  $B(0,\epsilon)^{C}\cap\sigma(S)$ are finite sets for every $\epsilon>0.$  We deduce,  from \cite[Theorem I.2.9]{mullerbook}, that  there exist   multiplicative functionals $\phi_{1},\dots,\phi_{n},\psi_{1},\dots,\psi_{m}$ on $\A$  (for the definition of a multiplicative functional, one can see \cite[Definition I.2.2]{mullerbook})    such that  $B(0,\mu)^{C}\cap[\sigma(T)\cup\sigma(S)]= \{\phi_{1}(T),\dots,\phi_{n}(T),\psi_{1}(S),\dots,\psi_{m}(S)\},$ where $\mu:=\frac{\epsilon}{\mbox{max}\{\|T\|,\|S\|\}}.$  Let $\phi$ be a multiplicative functional  such  that $|\phi(TS)|\geq \epsilon.$ Then  $\mbox{min}\{|\phi(T)|,|\phi(S)|\}\geq \mu,$ and thus   there exist $1\leq i\leq n$ and $1\leq j\leq m$ such that $\phi(TS)=\phi_{i}(T)\psi_{j}(S).$  Hence  $B(0,\epsilon)^{C}\cap\sigma(TS)$ is a finite set, and therefore $TS$ is zeroloid. \\
(ii)  If    $0\in \mbox{acc}\,\sigma(TS),$ then $B(0,\epsilon)\cap\left(\sigma(TS)\setminus\{0\}\right)\neq\emptyset$ for all   $\epsilon>0.$  So there exists a multiplicative functional $\phi_{\epsilon}$ on  $\A$  such that $0<|\phi_{\epsilon}(TS)|<\epsilon^{2}.$ Hence $|\phi_{\epsilon}(T)|<\epsilon$  or $|\phi_{\epsilon}(S)|<\epsilon.$ This  implies that  $B(0,\epsilon)\cap\left([\sigma(T)\cup\sigma(S)]\setminus\{0\}\right)\neq\emptyset,$  and then $0\in \mbox{acc}\,\sigma(T)\cup \mbox{acc}\,\sigma(S).$ 
\end{proof}
  The next definition describes the classes of operators we are going to study. 
\begin{dfn} Let  $T\in L(X)$ and let  $\alpha$ be an  ordinal.\\
 (1)   If $\alpha$ is a  successor ordinal,  we say that   $T$ is $g_{\sigma_{*}}^{\alpha}$-invertible if  there exists  $(M,N)\in \mbox{Red}(T)$   such that $T_{M}$  is invertible and $\mbox{acc}^{\alpha-1}\,\sigma_{*}(T_{N})\subset\{0\};$ where $\alpha-1$ is  the  predecessor of  $\alpha.$  \\
(2)    If $\alpha$ is a limit   ordinal,   we say that   $T$ is $g_{\sigma_{*}}^{\alpha}$-invertible if  there exists  a  successor ordinal $\beta<\alpha$  such that  $T$ is $g_{\sigma_{*}}^{\beta}$-invertible.\\
(3)  $T$ is called   $g_{\sigma_{*}}^{0}$-invertible if  $0\notin \sigma_{*}(T).$\\
 The class  of   $g_{\sigma_{*}}^{\alpha}$-invertible operators  is denoted by  $\mathcal{G}^{\alpha}_{\sigma_{*}}(X).$    
\end{dfn}
\begin{rema}(1) It is easily seen that   $\mathcal{G}_{\sigma}^{\alpha}(X)\subset\mathcal{G}_{\sigma_{b}}^{\alpha}(X)\subset\mathcal{G}_{\sigma_{d}}^{\alpha}(X)\subset\mathcal{G}_{\sigma}^{\alpha+1}(X)$ and   $\mathcal{G}^{\alpha}_{\sigma_{*}}(X)\subset\mathcal{G}^{\beta}_{\sigma_{*}}(X)$       for all ordinals $\alpha\leq\beta.$  \\
(2)   Let $\alpha$ be a limit   ordinal.  It is not difficult to see that       $T$ is $g_{\sigma_{*}}^{\alpha}$-invertible  iff   there exists  an ordinal $\beta<\alpha$  such that  $T$ is $g_{\sigma_{*}}^{\beta}$-invertible.  Thus $\displaystyle\mathcal{G}^{\alpha}_{\sigma_{*}}(X)=\bigcup_{\beta<\alpha}\mathcal{G}^{\beta}_{\sigma_{*}}(X).$
\end{rema}
 Recall \cite{koliha} that an operator $T$ is called   generalized Drazin invertible if there exists  $(M,N)\in \mbox{Red}(T)$ such that $T_{M}$ is invertible and  $\mbox{acc}\,\sigma(T_{N})\subset\{0\}.$ It was proved that $T$ is    generalized Drazin invertible  iff $0\notin \mbox{acc}\,\sigma(T).$ The following theorem gives a characterization of $g_{\sigma_{*}}^{\alpha}$-invertible operators in terms of $\mbox{acc}^{\alpha}\,\sigma_{*}(T).$  
 \begin{thm}\label{thmpgsigmaetoile}
Let  $T\in L(X)$ and  let  $\alpha$ be an ordinal. Then  $T-\mu I $  is $g_{\sigma_{*}}^{\alpha}$-invertible iff   $\mu \notin  \mbox{acc}^{\alpha}\,\sigma_{*}(T).$  
\end{thm}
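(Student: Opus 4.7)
My plan is to apply a translation to reduce to $\mu = 0$ (using $\sigma_*(T - \mu I) = \sigma_*(T) - \mu$ and the translation-invariance of $\mbox{acc}^\alpha$) and then proceed by transfinite induction on $\alpha$. The case $\alpha = 0$ is immediate from the definition of $g_{\sigma_*}^0$-invertibility. For a limit ordinal $\alpha$, Remark 2.3(2) gives $\mathcal{G}_{\sigma_*}^{\alpha}(X) = \bigcup_{\beta<\alpha}\mathcal{G}_{\sigma_*}^{\beta}(X)$, while the limit-ordinal clause of the definition of $\mbox{acc}^\alpha$ gives $0 \notin \mbox{acc}^\alpha\sigma_*(T)$ iff $0 \notin \mbox{acc}^\beta\sigma_*(T)$ for some $\beta < \alpha$; the inductive hypothesis bridges these two characterizations.

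For the successor case $\alpha = \beta + 1$, the forward direction combines the additivity of $\sigma_*$ under direct sums with the set-theoretic identity $\mbox{acc}^\gamma(A \cup B) = \mbox{acc}^\gamma A \cup \mbox{acc}^\gamma B$, which I would establish by a separate transfinite induction (the limit step using that the family $\{\mbox{acc}^\gamma A\}$ is decreasing). If $T = T_M \oplus T_N$ with $T_M$ invertible and $\mbox{acc}^\beta\sigma_*(T_N) \subset \{0\}$, then $\mbox{acc}^\alpha\sigma_*(T_N) \subset \mbox{acc}\{0\} = \emptyset$ and $\mbox{acc}^\alpha\sigma_*(T_M) \subset \sigma_*(T_M)$ does not contain $0$, so $0 \notin \mbox{acc}^\alpha\sigma_*(T)$.

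The backward direction of the successor case is the main obstacle. Given $0 \notin \mbox{acc}(\mbox{acc}^\beta\sigma_*(T))$, I pick $\epsilon > 0$ with $D(0,\epsilon) \cap \mbox{acc}^\beta\sigma_*(T) \subset \{0\}$ and aim to construct the decomposition via a Riesz projection. The delicacy is that a Riesz projection requires finding $\epsilon' \in (0,\epsilon)$ with $C(0,\epsilon') \cap \sigma(T) = \emptyset$, while the hypothesis only controls $\sigma_*$, not the full spectrum. The resolution uses the inclusions $\mbox{acc}^{\gamma+1}\sigma(T) \subset \mbox{acc}^\gamma\sigma_d(T) \subset \mbox{acc}^\gamma\sigma_b(T) \subset \mbox{acc}^\gamma\sigma(T)$ recalled in Section 1: these force some iterated Cantor-Bendixson derivative of the compact Polish set $\sigma(T) \cap D(0,\epsilon/2)$ to vanish, so this set is countable by the Cantor-Bendixson theorem, and therefore its set of moduli is countable, so a suitable $\epsilon'$ exists. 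Taking $P$ to be the Riesz projection onto the spectral set $\sigma(T) \cap B(0,\epsilon')$, $N = P(X)$, $M = (I - P)(X)$, one verifies that $(M,N) \in \mbox{Red}(T)$, that $T_M$ is invertible (as $0 \notin \sigma(T_M) \subset \sigma(T) \setminus B(0,\epsilon')$), and that $\mbox{acc}^\beta\sigma_*(T_N) \subset \mbox{acc}^\beta\sigma_*(T) \cap D(0,\epsilon') \subset \{0\}$, so $T \in \mathcal{G}_{\sigma_*}^{\alpha}(X)$ as required.
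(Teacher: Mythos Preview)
Your argument is correct and follows essentially the same route as the paper's proof: both reduce to $\mu=0$ by translation, find $\epsilon>0$ with $D(0,\epsilon)\cap\mbox{acc}^{\alpha-1}\sigma_{*}(T)\subset\{0\}$, use a countability argument (you via the vanishing of a Cantor--Bendixson derivative of $\sigma(T)\cap D(0,\epsilon/2)$, the paper via $\Pi_{\sigma_{*}}^{\alpha}(T)\subset\bigsqcup_{\beta<\lambda}\mbox{iso}(\mbox{acc}^{\beta}\sigma(T))$ with $\lambda=\mbox{CBR}(\sigma(T))<\omega_{1}$) to find a circle $C(0,\epsilon')$ missing $\sigma(T)$, and then take the Riesz decomposition associated to the resulting spectral set. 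The only differences are organisational: you package the argument as a transfinite induction and isolate the identity $\mbox{acc}^{\gamma}(A\cup B)=\mbox{acc}^{\gamma}A\cup\mbox{acc}^{\gamma}B$, whereas the paper treats all successor ordinals directly (without an inductive hypothesis) and then derives the limit case from the successor case.
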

\begin{proof}  Let us firstly prove that the result is true for all  successor  ordinals $\alpha.$  Let  $\mu \notin \mbox{acc}^{\alpha}\,\sigma_{*}(T).$  Since $\mbox{acc}^{\alpha}\,\sigma_{*}(T- z I)=\mbox{acc}^{\alpha}\,\sigma_{*}(T)-z$ for every complex number  $z,$ we may   assume that  $\mu=0.$    The case of $0 \notin \mbox{acc}\,\sigma(T)$  is clear.  Suppose now  that  $0 \in \mbox{acc}\,\sigma(T).$   Then $0\in \mbox{acc}\,(\Pi_{\sigma_{*}}^{\alpha}(T)),$ where $\Pi_{\sigma_{*}}^{\alpha}(T):=\sigma(T)\setminus \mbox{acc}^{\alpha-1}\,\sigma_{*}(T).$  As  $0\notin  \mbox{acc}^{\alpha}\,\sigma_{*}(T)$ then     $D(0,\epsilon)\cap [\mbox{acc}^{\alpha-1}\,\sigma_{*}(T)\setminus\{0\}]=\emptyset$ for some $\epsilon>0.$    Let $0<\mu<v\leq \epsilon.$   The set  $\Pi_{\sigma_{*}}^{\alpha}(T)$ is countable, since   $\lambda:=\mbox{CBR}(\sigma(T))<\omega_{1}$ and    $\Pi_{\sigma_{*}}^{\alpha}(T)\subset\sigma(T)\setminus\mbox{acc}^{\lambda}\,\sigma(T)=\displaystyle\bigsqcup_{\beta<\lambda}\mbox{iso}\,(\mbox{acc}^{\beta}\,\sigma(T)).$  Thus   there exists   $\mu\leq\epsilon_{(\mu,v)}\leq v$ such that     $C(0,\epsilon_{(\mu,v)})\cap\sigma(T)=\emptyset.$ Hence     $\sigma_{(\mu,v)}:= D(0,\epsilon_{(\mu,v)})  \cap\sigma(T)\subset \Pi_{\sigma_{*}}^{\alpha}(T)\cup\{0\}$ is a  countable spectral set of $\sigma(T).$   So    there exists $(M_{(\mu,v)}, N_{(\mu,v)})\in  \mbox{Red}(T)$  such that $\sigma(T_{M_{(\mu,v)}})=\sigma(T)\setminus\sigma_{(\mu,v)}$ and $\sigma(T_{N_{(\mu,v)}})=\sigma_{(\mu,v)}.$      Further, we have $\sigma_{(\mu,v)}\cap( \mbox{acc}^{\alpha-1}\,\sigma_{*}(T)\setminus\{0\})=\emptyset,$ and  thus     $\mbox{acc}^{\alpha-1}\,\sigma_{*}(T_{N_{(\mu,v)}})\subset \{0\}.$  This proves that  $T$ is $g_{\sigma_{*}}^{\alpha}$-invertible. 
Assume now that  $\alpha$ is a limit  ordinal.   If $0 \notin  \mbox{acc}^{\alpha}\,\sigma_{*}(T),$  then  there exists a successor   odinal  $\beta<\alpha$ such that $0 \notin  \mbox{acc}^{\beta}\,\sigma_{*}(T),$  which implies by the first case  that $T$ is $g_{\sigma_{*}}^{\beta}$-invertible, and thus $T$ is $g_{\sigma_{*}}^{\alpha}$-invertible.\\
Conversely, assume  that  $T$ is $g_{\sigma_{*}}^{\alpha}$-invertible for some ordinal $\alpha.$  Assume  that     $\alpha$ is a successor ordinal, then   there exists  $(M,N)\in \mbox{Red}(T)$   such that $T_{M}$  is invertible and $\mbox{acc}^{\alpha-1}\,\sigma_{*}(T_{N})\subset\{0\}.$ So there exists $\epsilon>0$ such that $B(0,\epsilon)\setminus\{0\}\subset(\sigma(T_{M}))^{C}\cap(\mbox{acc}^{\alpha-1}\,\sigma_{*}(T_{N}))^{C}\subset(\mbox{acc}^{\alpha-1}\,\sigma_{*}(T_{M}))^{C}\cap(\mbox{acc}^{\alpha-1}\,\sigma_{*}(T_{N}))^{C}=(\mbox{acc}^{\alpha-1}\,\sigma_{*}(T))^{C}.$ So  $0 \notin  \mbox{acc}^{\alpha}\,\sigma_{*}(T).$   The case of $\alpha$ is a limit ordinal is clear.  
\end{proof}
The following corollary is a  consequence of the previous theorem:
\begin{cor}   Let $T \in L(X)$ and let $\alpha$ be an ordinal.   The  following statements  hold:\\
(i)  $T$  is $g_{\sigma_{*}}^{\alpha}$-invertible iff  its dual adjoint  $T^{*}$  is $g_{\sigma_{*}}^{\alpha}$-invertible.\\
(ii) If $S$ is a bounded operator acts on a complex Banach space $Y,$  then  $S\oplus T$   is $g_{\sigma_{*}}^{\alpha}$-invertible iff both $S$ and $T$   are   $g_{\sigma_{*}}^{\alpha}$-invertible. \\
 (iii)   $T$  is $g_{\sigma_{*}}^{\alpha}$-invertible iff   $T^{m}$  is $g_{\sigma_{*}}^{\alpha}$-invertible for some  (equivalently for every) integer $m\geq 1.$
\end{cor}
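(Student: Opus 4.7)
The plan is to base all three assertions on Theorem~\ref{thmpgsigmaetoile}, which reduces $g_{\sigma_{*}}^{\alpha}$-invertibility of $T-\mu I$ to the question of whether $\mu\in\mbox{acc}^{\alpha}\,\sigma_{*}(T)$. It then suffices, in each case, to verify that the Cantor--Bendixson derivative $\mbox{acc}^{\alpha}$ interacts correctly with the operation in question (adjoint, direct sum, $m$-th power); these verifications are carried out by transfinite induction on $\alpha$.

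For (i), I would invoke the standard identity $\sigma_{*}(T)=\sigma_{*}(T^{*})$ for each $\sigma_{*}\in\{\sigma,\sigma_{b},\sigma_{d}\}$; since the two sets coincide, so do their $\alpha$-th Cantor--Bendixson derivatives, and Theorem~\ref{thmpgsigmaetoile} with $\mu=0$ yields the desired equivalence immediately. For (ii), I would use $\sigma_{*}(S\oplus T)=\sigma_{*}(S)\cup\sigma_{*}(T)$ together with the identity
$$\mbox{acc}^{\alpha}(A\cup B)=\mbox{acc}^{\alpha}\,A\,\cup\,\mbox{acc}^{\alpha}\,B,\qquad A,B\subset\C,$$
proved by transfinite induction on $\alpha$: the successor step is the classical fact $\mbox{acc}(A\cup B)=\mbox{acc}\,A\cup\mbox{acc}\,B$, while the limit step uses that $(\mbox{acc}^{\beta}A)_{\beta}$ and $(\mbox{acc}^{\beta}B)_{\beta}$ are decreasing families, so their intersections distribute over the finite union.

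For (iii), I would invoke the spectral mapping theorem $\sigma_{*}(T^{m})=\phi(\sigma_{*}(T))$ with $\phi(z)=z^{m}$ (valid for each of the three spectra) and prove by transfinite induction on $\alpha$ that
$$\mbox{acc}^{\alpha}\,\phi(A)=\phi(\mbox{acc}^{\alpha}\,A) \text{ for every compact } A\subset\C.$$
The successor step follows from $\mbox{acc}\,\phi(A)=\phi(\mbox{acc}\,A)$: the inclusion $\phi(\mbox{acc}\,A)\subset\mbox{acc}\,\phi(A)$ uses that for $\lambda_{n}\to\lambda$ in $A\setminus\{\lambda\}$ the discreteness of the fibers of $\phi$ forces $\phi(\lambda_{n})\neq\phi(\lambda)$ eventually, and the reverse inclusion uses properness of $\phi$ to extract a convergent subsequence of preimages. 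The limit step again uses properness of $\phi$ to commute it with the descending intersection of the compact sets $(\mbox{acc}^{\beta}\,A)_{\beta<\alpha}$. Since $\phi(0)=0$ and $\phi^{-1}(0)=\{0\}$, one obtains $0\in\mbox{acc}^{\alpha}\,\sigma_{*}(T)$ iff $0\in\mbox{acc}^{\alpha}\,\sigma_{*}(T^{m})$, and Theorem~\ref{thmpgsigmaetoile} finishes the proof.

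The main obstacle I foresee is the limit step of the transfinite induction in (iii), where interchanging $\phi$ with a possibly uncountable descending intersection of compact subsets of $\C$ depends critically on the properness of $\phi$ together with the compactness of $\sigma_{*}(T)$; everything else is either a direct appeal to standard spectral identities or a routine transfinite induction.
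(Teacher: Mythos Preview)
Your proposal is correct and follows exactly the approach the paper intends: the paper merely states that the corollary ``is a consequence of the previous theorem'' (Theorem~\ref{thmpgsigmaetoile}) without further detail, and you have supplied precisely those details---the standard identities $\sigma_{*}(T)=\sigma_{*}(T^{*})$, $\sigma_{*}(S\oplus T)=\sigma_{*}(S)\cup\sigma_{*}(T)$, $\sigma_{*}(T^{m})=\sigma_{*}(T)^{m}$, together with the transfinite inductions showing that $\mbox{acc}^{\alpha}$ commutes with finite unions and with the map $z\mapsto z^{m}$. Your handling of the limit steps (using that the families are decreasing, and that $\phi^{-1}(\mu)$ is finite so a decreasing chain of nonempty compact---indeed finite---sets has nonempty intersection) is sound.
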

Since $\mbox{CBR}(\sigma(T))<\omega_{1}$ and $\mbox{acc}^{\omega}\,\sigma(T)=\mbox{acc}^{\omega}\,\sigma_{b}(T)=\mbox{acc}^{\omega}\,\sigma_{d}(T)$  for all $T \in L(X),$ we deduce,  from Theorem \ref{thmpgsigmaetoile},  the following   corollary, which in particular shows  that the  $g_{\sigma_{*}}^{\alpha}$-invertibility is important only for countable ordinals.
\begin{cor} Let $T \in L(X).$ The following assertions hold:\\
 (i)   If  $\alpha\geq \omega_{1}$ is  an ordinal, then   $T$ is $g_{\sigma_{*}}^{\alpha}$-invertible iff $T$ is $g_{\sigma_{*}}^{\beta}$-invertible  for some $\beta<\omega_{1}.$\\
(ii)  If  $\alpha\geq\omega,$  then $T$ is $g_{\sigma_{*}}^{\alpha}$-invertible iff $T$ is $g_{\sigma_{**}}^{\alpha}$-invertible;   where  $\sigma_{**}\in\{\sigma, \sigma_{b},\sigma_{d}\}.$
\end{cor}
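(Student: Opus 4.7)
The plan is to derive both parts directly from Theorem \ref{thmpgsigmaetoile}, which translates $g_{\sigma_*}^{\alpha}$-invertibility of $T$ into the spectral condition $0\notin \mbox{acc}^{\alpha}\,\sigma_{*}(T)$. Once this translation is made, both assertions reduce to purely set-theoretic observations about the Cantor--Bendixson derivatives of the compact subset $\sigma_{*}(T)\subset\C$, already recorded in the introduction. I would begin the proof by invoking Theorem \ref{thmpgsigmaetoile} at $\mu=0$ to rewrite every occurrence of ``$g_{\sigma_{*}}^{\alpha}$-invertible'' in terms of whether $0$ lies in $\mbox{acc}^{\alpha}\,\sigma_{*}(T)$.

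For part (i), the key input is that $\sigma_{*}(T)$ is Polish, so by \cite[Theorem 4.9]{kechrisbbook} (as quoted in the introduction) $\lambda:=\mbox{CBR}(\sigma_{*}(T))<\omega_{1}$. Hence the decreasing family $(\mbox{acc}^{\gamma}\,\sigma_{*}(T))_{\gamma}$ is stationary from $\gamma=\lambda$ onwards, which gives $\mbox{acc}^{\alpha}\,\sigma_{*}(T)=\mbox{acc}^{\lambda}\,\sigma_{*}(T)$ for every $\alpha\geq\lambda$, and in particular for every $\alpha\geq\omega_{1}$. Applying Theorem \ref{thmpgsigmaetoile} twice, $T$ is $g_{\sigma_{*}}^{\alpha}$-invertible iff $0\notin \mbox{acc}^{\lambda}\,\sigma_{*}(T)$ iff $T$ is $g_{\sigma_{*}}^{\lambda}$-invertible, and $\lambda<\omega_{1}$ supplies the required $\beta$. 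The reverse implication is the monotonicity inclusion $\mathcal{G}^{\beta}_{\sigma_{*}}(X)\subset\mathcal{G}^{\alpha}_{\sigma_{*}}(X)$ for $\beta\leq\alpha$ recorded in the remark after the definition.

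For part (ii), the input is the identity $\mbox{acc}^{\alpha}\,\sigma(T)=\mbox{acc}^{\alpha}\,\sigma_{b}(T)=\mbox{acc}^{\alpha}\,\sigma_{d}(T)$ valid for all $\alpha\geq\omega$, which was established in the introduction via the inclusions $\mbox{acc}^{\alpha+1}\,\sigma(T)\subset\mbox{acc}^{\alpha}\,\sigma_{d}(T)\subset\mbox{acc}^{\alpha}\,\sigma_{b}(T)\subset\mbox{acc}^{\alpha}\,\sigma(T)$. Consequently, for any choice of $\sigma_{*},\sigma_{**}\in\{\sigma,\sigma_{b},\sigma_{d}\}$ and any $\alpha\geq\omega$, the condition $0\notin\mbox{acc}^{\alpha}\,\sigma_{*}(T)$ is the same as $0\notin\mbox{acc}^{\alpha}\,\sigma_{**}(T)$. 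A final application of Theorem \ref{thmpgsigmaetoile} closes the argument.

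I do not expect a real obstacle in either part: both are essentially bookkeeping exercises on top of Theorem \ref{thmpgsigmaetoile}, with the only subtle point being to invoke the stabilization of the transfinite Cantor--Bendixson sequence at the countable ordinal $\lambda=\mbox{CBR}(\sigma_{*}(T))$ in part (i). The slight care needed is to handle limit ordinals $\alpha$ in Definition 2.2 consistently, but since the whole argument is run via the spectral reformulation, the distinction between successor and limit ordinals never appears explicitly.
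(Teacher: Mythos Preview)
Your proposal is correct and matches the paper's own approach exactly: the paper also derives the corollary directly from Theorem~\ref{thmpgsigmaetoile} together with the facts, recorded in the introduction, that $\mbox{CBR}(\sigma_{*}(T))<\omega_{1}$ and that $\mbox{acc}^{\alpha}\,\sigma(T)=\mbox{acc}^{\alpha}\,\sigma_{b}(T)=\mbox{acc}^{\alpha}\,\sigma_{d}(T)$ for all $\alpha\geq\omega$. Your write-up is in fact more detailed than the one-line justification the paper provides.
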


\begin{rema} Let  $T \in L(X)$ and   let  $$\mathcal{C}(T):=\{(T-zI)_{M}: z \in \C \text{ and } M \text{ is a closed  $T$-invariant subspace}\},$$  and denote the set of all subsets of $\mathbb{C}$ by $\mathcal{P}(\mathbb{C}).$  One can see from the proof  of  Theorem \ref{thmpgsigmaetoile}  that if  $\tilde{\sigma}: \mathcal{C}(T)  \longrightarrow \mathcal{P}(\mathbb{C})$   is a mapping that satisfies:\\
(i) $\tilde{\sigma}(T_{M})$ is closed and  $\tilde{\sigma}(T_{M})\subset\sigma (T_{M})$  for  all  closed $T$-invariant subspace   $M;$\\
(ii) $\tilde{\sigma}(T)=\tilde{\sigma}(T_{M})\cup\tilde{\sigma}(T_{N})$ for all $(M,N)\in \mbox{Red}(T);$\\
(iii) $\tilde{\sigma}(T-zI)=\tilde{\sigma}(T)-z$ for all complex numbers $z;$\\
(iv) $ \mbox{acc}^{\beta}\,\sigma(T)\subset \mbox{acc}^{\gamma}\,\tilde{\sigma}(T)$ for some ordinals $\beta,\gamma;$\\
then for all  successor  ordinal $\alpha,$    $z \notin  \mbox{acc}^{\alpha}\,\tilde{\sigma}(T)$ iff    there exists $(M,N)\in \mbox{Red}(T)$ such that $(T-zI)_{M}$ is invertible and $\mbox{acc}^{\alpha-1}\,\tilde{\sigma}((T-zI)_{N})\subset\{0\}.$ 
\end{rema}
\begin{dfn} Let $\alpha$ be an ordinal and let $T \in L(X)$ be a $g_{\sigma_{*}}^{\alpha}$-invertible. The  least ordinal $\beta$  such that    $0\notin\mbox{acc}^{\beta}\,\sigma_{*}(T)$ will be  called the degree of $g_{\sigma_{*}}$-invertibility of $T,$   denoted by $\beta:=\mbox{d}_{\sigma_{*}}(T).$  
\end{dfn}   
\begin{rema} (1)  If  $T \in L(X)$  is $g_{\sigma_{*}}^{\alpha}$-invertible   for some ordinal $\alpha$ and $0\in \sigma_{*}(T),$  then    the degree of $g_{\sigma_{*}}$-invertibility of $T$ is  the      ordinal  $\beta$  such that    $0\in\mbox{iso}\,\mbox{acc}^{\beta}\,\sigma_{*}(T).$ Moreover, it cannot be a limit ordinal.\\
(2)  Let $A$ be a compact complex subset.  The  Cantor-Bendixson rank of $x\in A$  is defined by  $\mbox{CBR}_{A}(x):=\mbox{max}\,\{\alpha: x \in \mbox{acc}^{\alpha}\,A\}.$ So if $T \in L(X)$ is  $g_{\sigma_{*}}^{\alpha}$-invertible for some ordinal $\alpha$ and $0\in \sigma_{*}(T),$ then $\mbox{CBR}_{\sigma_{*}(T)}(0)=\mbox{d}_{\sigma_{*}}(T)-1<\omega_{1}.$\\
(3) Clearly, if $\alpha$ is an ordinal, then $$\mbox{d}_{\sigma_{d}}(T)\leq\mbox{d}_{\sigma_{b}}(T)\leq\mbox{d}_{\sigma}(T)\leq\mbox{min}\,\{\mbox{CBR}(\sigma_{d}(T)), \mbox{d}_{\sigma_{d}}(T)+1\}$$ for every $g_{\sigma_{*}}^{\alpha}$-invertible operator $T,$ and if $\omega\leq\mbox{d}_{\sigma}(T),$ then $\mbox{d}_{\sigma}(T)=\mbox{d}_{\sigma_{b}}(T)=\mbox{d}_{\sigma_{d}}(T).$
\end{rema}

\begin{rema}\label{remarkinv} It follows from the proof of  Theorem \ref{thmpgsigmaetoile} that if $T$ is $g_{\sigma_{*}}^{\alpha}$-invertible for some successor  ordinal $\alpha>0,$ then there exists $(M,N)\in \mbox{Red}(T)$ such that $T_{M}$ is invertible, $\mbox{acc}^{\alpha-1}\,\sigma_{*}(T_{N})\subset\{0\},$  $\sigma(T_{M})\cap\sigma(T_{N})=\emptyset$ and $\sigma(T_{N})\setminus\{0\}\subset \Pi_{\sigma_{*}}^{\alpha}(T).$  So $\tilde{\sigma}:=\sigma(T_{N})$  is at most countable spectral set, $N=\R(P_{\tilde{\sigma}})$ and $M=\mathcal{N}(P_{\tilde{\sigma}}),$ where $P_{\tilde{\sigma}}$ is the spectral projection  of $T$  corresponding to $\tilde{\sigma}.$   This entails that the operator $T^{D_{\alpha}}_{\tilde{\sigma}}:=(T_{M})^{-1}\oplus 0_{N} \in \mbox{comm}^{2}(T).$ Clearly, $T^{D_{\alpha}}_{\tilde{\sigma}}$ is Drazin invertible,   $T^{D_{\alpha}}_{\tilde{\sigma}}TT^{D_{\alpha}}_{\tilde{\sigma}}=T^{D_{\alpha}}_{\tilde{\sigma}}$ and $\mbox{acc}^{\alpha-1}\,\sigma_{*}(T-T^{2}T^{D_{\alpha}}_{\tilde{\sigma}})\subset\{0\}.$  Note also that $\mbox{acc}^{\alpha-1}\,\sigma_{*}(TP_{\tilde{\sigma}})\subset\{0\}$ and  $T+P_{\tilde{\sigma}}=T_{M}\oplus (T+I)_{N}$  is  $g_{\sigma_{*}}^{\alpha-1}$-invertible.
\end{rema}
For a successor ordinal $\alpha<\omega_{1}$    and a    $g_{\sigma_{*}}^{\alpha}$-invertible operator  $T\in L(X),$ denote  by $$g_{\sigma_{*}}^{\alpha}\mathcal{D}(T):=\{(M,N)\in \mbox{Red}(T) \text{ such that } T_{M}  \text{ is invertible and  } \mbox{acc}^{\alpha-1}\,\sigma_{*}(T_{N})\subset\{0\}\}.$$
So $g_{\sigma_{*}}^{\alpha}\mathcal{D}(T)\subset g_{\sigma_{*}}^{\alpha+1}\mathcal{D}(T).$   The following proposition prove  that if in addition $T$ is not generalized Drazin invertible, then   $g_{\sigma_{*}}^{\alpha}\mathcal{D}(T)$ is at least  countable.
\begin{prop} Let $\alpha\geq 1$ be  a successor ordinal. If $T\in L(X)$ is $g_{\sigma_{*}}^{\alpha}$-invertible and  not generalized Drazin invertible, then there exists $\{(M_{m},N_{m})\}_{m}\subset g_{\sigma_{*}}^{n}\mathcal{D}(T)$  such that $(\sigma(T_{M_{m}}))_{m}$ is  a strictly increasing  sequence,  $(\sigma(T_{N_{m}}))_{m}$ is  a strictly decreasing   sequence    and $\sigma(T_{M_{m}})\cap\sigma(T_{N_{m}})=\emptyset$  for all $m\in \N.$  
\end{prop}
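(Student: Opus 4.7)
The plan is to iteratively carve off smaller and smaller countable spectral sets around $0$ by choosing circles that miss $\sigma(T)$, exactly as in the proof of Theorem \ref{thmpgsigmaetoile}, and to arrange at each step that the new radius skips over at least one surviving spectral point, which forces the desired strict monotonicity.

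Since $T$ is $g_{\sigma_{*}}^{\alpha}$-invertible with $\alpha$ a successor ordinal, Theorem \ref{thmpgsigmaetoile} (and its proof) yields $\epsilon_{0}>0$ with $D(0,\epsilon_{0})\cap[\mbox{acc}^{\alpha-1}\,\sigma_{*}(T)\setminus\{0\}]=\emptyset,$ and $\sigma(T)\cap D(0,\epsilon_{0})\subset\Pi_{\sigma_{*}}^{\alpha}(T)\cup\{0\}$ is at most countable. On the other hand $T$ is not generalized Drazin invertible, so $0\in\mbox{acc}\,\sigma(T),$ hence $B(0,\delta)\cap\sigma(T)\setminus\{0\}\neq\emptyset$ for every $\delta>0.$ I then build inductively a strictly decreasing sequence $(\epsilon^{(m)})_{m\geq 1}\subset (0,\epsilon_{0}]$ with $C(0,\epsilon^{(m)})\cap\sigma(T)=\emptyset$ as follows: start from any $\epsilon^{(1)}\in(0,\epsilon_{0}]$ off the (countable) set of moduli of points in $\sigma(T)\cap D(0,\epsilon_{0});$ given $\epsilon^{(m)},$ pick $\lambda_{m}\in\sigma(T)\cap B(0,\epsilon^{(m)})\setminus\{0\}$ and then choose $\epsilon^{(m+1)}$ in the uncountable interval $(0,|\lambda_{m}|)$ avoiding this same countable set of spectral moduli. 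By construction $\lambda_{m}\in D(0,\epsilon^{(m)})\setminus D(0,\epsilon^{(m+1)}).$

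Set $\sigma_{m}:=D(0,\epsilon^{(m)})\cap\sigma(T),$ which is a clopen (hence spectral) countable subset of $\sigma(T),$ and let $(M_{m},N_{m})\in\mbox{Red}(T)$ be the associated Riesz decomposition, so $\sigma(T_{M_{m}})=\sigma(T)\setminus\sigma_{m}$ and $\sigma(T_{N_{m}})=\sigma_{m}.$ Since $0\in\sigma_{m},$ the restriction $T_{M_{m}}$ is invertible; and using the identity $\mbox{acc}^{\alpha-1}\,\sigma_{*}(T)=\mbox{acc}^{\alpha-1}\,\sigma_{*}(T_{M_{m}})\cup\mbox{acc}^{\alpha-1}\,\sigma_{*}(T_{N_{m}})$ (which is implicit in the converse direction of the proof of Theorem \ref{thmpgsigmaetoile}) together with $\sigma(T_{N_{m}})\subset D(0,\epsilon_{0}),$ one obtains $\mbox{acc}^{\alpha-1}\,\sigma_{*}(T_{N_{m}})\subset D(0,\epsilon_{0})\cap\mbox{acc}^{\alpha-1}\,\sigma_{*}(T)\subset\{0\}.$ Hence $(M_{m},N_{m})\in g_{\sigma_{*}}^{\alpha}\mathcal{D}(T).$ The disjointness $\sigma(T_{M_{m}})\cap\sigma(T_{N_{m}})=\emptyset$ is immediate, while $\lambda_{m}\in\sigma_{m}\setminus\sigma_{m+1}$ combined with $\sigma_{m+1}\subset\sigma_{m}$ yields strict decrease of $(\sigma(T_{N_{m}}))_{m}$ and strict increase of its complement $(\sigma(T_{M_{m}}))_{m}.$

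The only point requiring care is the joint scheduling of the two constraints on $\epsilon^{(m+1)}$ (the circle $C(0,\epsilon^{(m+1)})$ must miss $\sigma(T),$ and $\lambda_{m}$ must lie outside the new closed disk), but this is handled by first fixing $\lambda_{m}$ and then choosing $\epsilon^{(m+1)}$ in the uncountable interval $(0,|\lambda_{m}|)$ off the countable set of moduli coming from $\Pi_{\sigma_{*}}^{\alpha}(T)\cup\{0\}.$ All remaining steps simply reuse the Riesz decomposition machinery and the accumulation-set identities already exploited in the proof of Theorem \ref{thmpgsigmaetoile}, so I do not anticipate further obstacles.
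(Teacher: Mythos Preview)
Your proof is correct and follows essentially the same approach as the paper: both build a strictly decreasing sequence of radii $\epsilon_m$ with $C(0,\epsilon_m)\cap\sigma(T)=\emptyset$ (using countability of $\sigma(T)\cap D(0,\epsilon_0)$ and $0\in\mbox{acc}\,\sigma(T)$), set $\sigma_m=D(0,\epsilon_m)\cap\sigma(T)$, and take $(M_m,N_m)$ to be the Riesz decomposition corresponding to the spectral set $\sigma_m$. Your explicit choice of $\lambda_m\in\sigma(T)\cap B(0,\epsilon^{(m)})\setminus\{0\}$ before selecting $\epsilon^{(m+1)}<|\lambda_m|$ makes the strict monotonicity transparent, but this is just a more detailed rendering of what the paper asserts in a single sentence.
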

\begin{proof} Since   $T$ is $g_{\sigma_{*}}^{\alpha}$-invertible and  not generalized Drazin invertible,  there exists $\epsilon>0$ such that  $D(0,\epsilon)\cap [\mbox{acc}^{\alpha-1}\,\sigma_{*}(T)\setminus\{0\}]=\emptyset.$  Moreover, there exists   a positive strictly   decreasing sequence  $(\epsilon_{m})_{m}\subset\mathbb{R}$  such that  $\epsilon=\epsilon_{0},$ $C(0,\epsilon_{m})\cap\sigma(T)=\emptyset$    and       $D(0,\epsilon_{m+1})\cap\sigma(T)\subsetneq D(0,\epsilon_{m})  \cap\sigma(T).$ Denote by $\sigma_{m}:= D(0,\epsilon_{m})  \cap\sigma(T).$  It follows that  $(\sigma_{m})_{m}$ is a strictly decreasing    sequence of  countable spectral sets of  $\sigma(T).$   By taking $(M_{m},N_{m})=(\mathcal{N}(P_{\sigma_{m}}), \R(P_{\sigma_{m}})),$ where $P_{\sigma_{m}}$ is the spectral projection of $T$  corresponding to $\sigma_{m},$ we get the desired result.
\end{proof}

  Hereafter,  denote by $\mbox{Red}^{2}(T):=\{(M,N)\in \mbox{Red}(S): S\in \mbox{comm}(T)\}.$  
\begin{thm}\label{thmp3}  Let $\alpha\geq 1$ be a successor ordinal.   The  following statements are equivalent  for $T \in L(X)$:\\
(i) $T$  is $g_{\sigma_{*}}^{\alpha}$-invertible;\\
(ii) $0\notin \mbox{acc}^{\alpha}\,\sigma_{*}(T);$\\
(iii)  There exists $(M,N)\in \mbox{Red}^{2}(T)\cap g_{\sigma_{*}}^{\alpha}\mathcal{D}(T);$\\
(iv) There exists  a spectral set $\tilde{\sigma}$   of $T$ such that  $0\notin \sigma(T)\setminus\tilde{\sigma}$    and  $\tilde{\sigma}\setminus\{0\}\subset \Pi_{\sigma_{*}}^{\alpha}(T);$\\
(v) There exists a bounded projection $P\in \mbox{comm}(T)$    such that  $T+P$ is $g_{\sigma_{*}}^{\alpha-1}$-invertible    and  $\mbox{acc}^{\alpha-1}\,\sigma_{*}(TP)\subset\{0\};$\\
(vi) There exists a bounded projection $P\in \mbox{comm}^{2}(T)$    such that  $T+P$ is $g_{\sigma_{*}}^{\alpha-1}$-invertible     and  $\mbox{acc}^{\alpha-1}\,\sigma_{*}(TP)\subset\{0\};$\\
(vii)  There exists   $S \in \mbox{comm}(T)$  such that   $S^{2}T = S$ and  $\mbox{acc}^{\alpha-1}\,\sigma_{*}(T-T^{2}S)\subset\{0\};$\\  
(viii)  There exists   $S \in \mbox{comm}^{2}(T)$  such that   $S^{2}T = S$ and  $\mbox{acc}^{\alpha-1}\,\sigma_{*}(T-T^{2}S)\subset\{0\}.$
\end{thm}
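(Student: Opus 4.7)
The plan is to take (i)$\Leftrightarrow$(ii) for free from Theorem \ref{thmpgsigmaetoile} and to close two cycles around this pair. The first cycle, (ii)$\Rightarrow$(iv)$\Rightarrow$(iii)$\Rightarrow$(vi)$\Rightarrow$(v)$\Rightarrow$(ii), will be carried throughout by one object: the Riesz spectral projection $P_{\tilde\sigma}$ attached to a small disk around $0$. The second cycle, (i)$\Rightarrow$(viii)$\Rightarrow$(vii)$\Rightarrow$(i), will be driven on the forward leg by the generalised inverse $S:=T^{D_{\alpha}}_{\tilde\sigma}$ of Remark \ref{remarkinv}, and on the return by the idempotent $P:=ST$.

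For the first cycle, I would handle (ii)$\Rightarrow$(iv) by re-running the construction embedded in the proof of Theorem \ref{thmpgsigmaetoile}: the countability of $\Pi_{\sigma_{*}}^{\alpha}(T)$ supplies a radius $\epsilon_{(\mu,v)}$ with $C(0,\epsilon_{(\mu,v)})\cap\sigma(T)=\emptyset$, and $\tilde\sigma:=D(0,\epsilon_{(\mu,v)})\cap\sigma(T)$ is the required spectral set. For (iv)$\Rightarrow$(iii) I would attach the Riesz projection $P_{\tilde\sigma}\in\mbox{comm}^{2}(T)$, so that $(M,N):=(\mathcal{N}(P_{\tilde\sigma}),\mathcal{R}(P_{\tilde\sigma}))\in\mbox{Red}^{2}(T)$ with $T_M$ invertible (from $\sigma(T_M)=\sigma(T)\setminus\tilde\sigma\not\ni 0$) and $\mbox{acc}^{\alpha-1}\,\sigma_{*}(T_N)\subset\{0\}$ (from $\sigma_{*}(T_N)\setminus\{0\}\subset\tilde\sigma\setminus\{0\}\subset\Pi_{\sigma_{*}}^{\alpha}(T)$ together with $\mbox{acc}^{\alpha-1}\,\sigma_{*}(T_N)\subset\mbox{acc}^{\alpha-1}\,\sigma_{*}(T)$). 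The same projection $P$ furnishes (iii)$\Rightarrow$(vi): $T+P=T_M\oplus(T_N+I_N)$ has $\sigma_{*}$-spectrum $\sigma_{*}(T_M)\cup(\sigma_{*}(T_N)+1)$, whose $\mbox{acc}^{\alpha-1}$-derivative sits in $\sigma_{*}(T_M)\cup\{1\}$ and therefore misses $0$; and $TP=0_M\oplus T_N$ plainly satisfies $\mbox{acc}^{\alpha-1}\,\sigma_{*}(TP)\subset\{0\}$. The drop (vi)$\Rightarrow$(v) is trivial. To close with (v)$\Rightarrow$(ii), I would set $(M,N)=(\mathcal{N}(P),\mathcal{R}(P))\in\mbox{Red}(T)$; then $TP=0_M\oplus T_N$ extracts $\mbox{acc}^{\alpha-1}\,\sigma_{*}(T_N)\subset\{0\}$, hence $\mbox{acc}^{\alpha}\,\sigma_{*}(T_N)=\emptyset$, while the decomposition of $T+P$ together with $\mbox{acc}^{\alpha-1}(\sigma_{*}(T_N)+1)\subset\{1\}$ forces $0\notin\mbox{acc}^{\alpha-1}\,\sigma_{*}(T_M)$; since this closed set avoids $0$, also $0\notin\mbox{acc}^{\alpha}\,\sigma_{*}(T_M)$, and $\mbox{acc}^{\alpha}(A\cup B)=\mbox{acc}^{\alpha}A\cup\mbox{acc}^{\alpha}B$ then delivers (ii).

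For the second cycle, (i)$\Rightarrow$(viii) is Remark \ref{remarkinv}: from the decomposition supplied by Theorem \ref{thmpgsigmaetoile}, set $S:=(T_M)^{-1}\oplus 0_N\in\mbox{comm}^{2}(T)$; then $S^{2}T=S$ and $T-T^{2}S=0_M\oplus T_N$ yield $\mbox{acc}^{\alpha-1}\,\sigma_{*}(T-T^{2}S)\subset\{0\}$. (viii)$\Rightarrow$(vii) is trivial. For (vii)$\Rightarrow$(i) I would put $P:=ST=TS$; the identity $P^{2}=S(TS)T=S(ST)T=S^{2}T\cdot T=ST=P$ makes $P$ an idempotent in $\mbox{comm}(T)$, and with $M:=\mathcal{R}(P),\,N:=\mathcal{N}(P)$ we get $(M,N)\in\mbox{Red}(T)$. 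Since $PS=STS=S(ST)=S^{2}T=S$, $S$ maps $X$ into $M$, so $S|_{M}:M\to M$; combined with $(ST)|_{M}=P|_{M}=I_{M}=(TS)|_{M}$, this makes $S|_{M}$ a two-sided inverse of $T_{M}$, so $T_{M}$ is invertible. Finally $T-T^{2}S=T(I-P)=0_{M}\oplus T_{N}$ extracts $\mbox{acc}^{\alpha-1}\,\sigma_{*}(T_{N})\subset\{0\}$, which is (i). The principal obstacle to watch is the double-commutant bookkeeping — every spectral projection I invoke must genuinely lie in $\mbox{comm}^{2}(T)$, which is classical Riesz calculus — together with the set-theoretic identity $\mbox{acc}^{\alpha}(A\cup B)=\mbox{acc}^{\alpha}A\cup\mbox{acc}^{\alpha}B$ used in (v)$\Rightarrow$(ii), which needs one brief transfinite induction whose only delicate point is the limit step, resolved by exploiting that $(\mbox{acc}^{\beta}A)_\beta$ and $(\mbox{acc}^{\beta}B)_\beta$ are decreasing chains of compacts.
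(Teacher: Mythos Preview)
Your proposal is correct and follows essentially the same architecture as the paper: both lean on Theorem~\ref{thmpgsigmaetoile} and Remark~\ref{remarkinv} for the forward implications, and both close the $(vii)\Rightarrow(i)$ leg via the idempotent $TS$ and the $(iv)\Rightarrow(i)/(iii)$ leg via the Riesz projection $P_{\tilde\sigma}$.

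The one substantive divergence is in closing statement (v). The paper proves $(v)\Rightarrow(i)$ \emph{constructively}: from $(A,B)=(\mathcal N(P),\mathcal R(P))$ it uses that $T_{A}$ is $g_{\sigma_{*}}^{\alpha-1}$-invertible to extract a further splitting $A=C\oplus D$ with $T_{C}$ invertible and $\mbox{acc}^{\alpha-1}\sigma_{*}(T_{D})\subset\{0\}$, then glues to $(C,\,D\oplus B)\in g_{\sigma_{*}}^{\alpha}\mathcal D(T)$. You instead prove $(v)\Rightarrow(ii)$ \emph{spectrally}, reading off $0\notin\mbox{acc}^{\alpha-1}\sigma_{*}(T_{M})$ from $T+P=T_{M}\oplus(T_{N}+I_{N})$ and invoking the identity $\mbox{acc}^{\alpha}(A\cup B)=\mbox{acc}^{\alpha}A\cup\mbox{acc}^{\alpha}B$. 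Your route avoids the second decomposition (and hence the mild case split over whether $\alpha-1$ is $0$, successor, or limit that the paper's definition of $g_{\sigma_{*}}^{\alpha-1}$-invertibility hides), at the modest price of the transfinite-induction lemma you flagged; conversely, the paper's route yields an explicit reducing pair directly, without that set-theoretic lemma. Both are short and standard. One cosmetic remark: the limit step of your lemma needs only that the chains $(\mbox{acc}^{\beta}A)_{\beta}$ are decreasing, not that they are compact.
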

\begin{proof} The equivalence   (i) $\Longleftrightarrow$ (ii) is proved in Theorem \ref{thmpgsigmaetoile}.  The implications   (i) $\Longrightarrow$ (iii),  (i) $\Longrightarrow$ (iv)  and (i) $\Longrightarrow$ (vi)  are proved in Remark \ref{remarkinv}. \\ 
(iv) $\Longrightarrow$ (i)  Let $\tilde{\sigma}$  be a spectral set   of $T$ such that  $0\notin \sigma(T)\setminus\tilde{\sigma}$    and  $\tilde{\sigma}\setminus\{0\}\subset \Pi_{\sigma_{*}}^{\alpha}(T).$  There exists   $(M,N)\in \mbox{Red}(T)$ such that $\sigma(T_{M})= \sigma(T)\setminus\tilde{\sigma}$ and $\sigma(T_{N})=\tilde{\sigma}.$ It is easily seen that  $(M,N)\in g_{\sigma_{*}}^{\alpha}\mathcal{D}(T),$ and then  $T$  is  $g_{\sigma_{*}}^{\alpha}$-invertible.\\
(iii) $\Longrightarrow$ (viii)  Let  $(M,N)\in \mbox{Red}^{2}(T)\cap g_{\sigma_{*}}^{\alpha}\mathcal{D}(T).$   The operator  $S=(T_{M})^{-1}\oplus 0_{N}$  is Drazin invertible. Moreover,   $S\in \mbox{comm}^{2}(T),$ $TS =I_{M}\oplus 0_{N},$ $S^{2}T= S$ and $T-T^{2}S =0_{M}\oplus T_{N}.$ Thus $\mbox{acc}^{\alpha-1}\,\sigma_{*}(T-T^{2}S)\subset\{0\}.$\\
(vii) $\Longrightarrow$  (i)  Suppose that there exists $S \in \mbox{comm}(T)$  such that $S^{2}T= S$ and  $\mbox{acc}^{\alpha-1}\,\sigma_{*}(T-T^{2}S)\subset\{0\}.$  Then    $TS$ is a projection and    $(M,N):=(\R(TS),\mathcal{N}(TS))\in \mbox{Red}(T)\cap  \mbox{Red}(S).$    It is not difficult to see that  $T_{M}$ is invertible and $S=(T_{M})^{-1}\oplus 0_{N}.$    Thus  $T-T^{2}S=0\oplus T_{N}.$ Consequently,   $T$  is  $g_{\sigma_{*}}^{\alpha}$-invertible.\\
(v) $\Longrightarrow$ (i) Suppose that there exists a bounded projection $P \in \mbox{comm}(T)$  such that   $T + P$ is $g_{\sigma_{*}}^{\alpha-1}$-invertible  and $\mbox{acc}^{\alpha-1}\,\sigma_{*}(TP)\subset\{0\}.$  Then $(A,B):=(\mathcal{N}(P),\R(P))\in \mbox{Red}(T).$    Therefore  $T_{A}$ is $g_{\sigma_{*}}^{\alpha-1}$-invertible    and $\mbox{acc}^{\alpha-1}\,\sigma_{*}(T_{B})\subset\mbox{acc}^{\alpha-1}\,\sigma_{*}(TP)\subset\{0\}.$  Hence    there exists $(C,D) \in Red(T_{A})$ such that $T_{C}$ is invertible and $\mbox{acc}^{\alpha-1}\,\sigma_{*}(T_{D})\subset\{0\}.$ Thus   $\mbox{acc}^{\alpha-1}\,\sigma_{*}(T_{D\oplus B})\subset\{0\},$  and then  $T$ is $g_{\sigma_{*}}^{\alpha}$-invertible.   This completes the proof of the theorem.
\end{proof}
   For $T \in L(X)$ and for a subspace $M$ of $X,$   $T_{X/M}$ means the operator induced by $T$ on the quotient space $X/M.$ The next proposition  is a consequence of Theorem \ref{thmpgsigmaetoile} and the fact that $\sigma_{*}(T)\subset\sigma_{*}(T_{M})\cup\sigma_{*}(T_{X/M}).$
\begin{prop}\label{proporth} Let $\alpha$  be an ordinal. Let $T\in L(X)$ and let  $M$ be  a closed $T$-invariant subspace. If  both $T_{M}$ and $T_{X/M}$ are   $g_{\sigma_{*}}^{\alpha}$-invertible, then $T$ is $g_{\sigma_{*}}^{\alpha}$-invertible.
\end{prop}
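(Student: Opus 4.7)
The plan is to reduce the statement to a membership question about Cantor--Bendixson derivatives via Theorem \ref{thmpgsigmaetoile}, combine it with the stated inclusion $\sigma_{*}(T)\subset\sigma_{*}(T_{M})\cup\sigma_{*}(T_{X/M})$, and then transport the inclusion across $\alpha$ iterations of the derivation operator by a short transfinite induction.

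First, I would translate the hypotheses. By Theorem \ref{thmpgsigmaetoile} applied at $\mu=0$, the assumption that $T_{M}$ and $T_{X/M}$ are both $g_{\sigma_{*}}^{\alpha}$-invertible is the same as
$$0\notin\mbox{acc}^{\alpha}\,\sigma_{*}(T_{M})\quad\text{and}\quad 0\notin\mbox{acc}^{\alpha}\,\sigma_{*}(T_{X/M}),$$
and the desired conclusion is equivalent to $0\notin\mbox{acc}^{\alpha}\,\sigma_{*}(T)$. Using the inclusion $\sigma_{*}(T)\subset\sigma_{*}(T_{M})\cup\sigma_{*}(T_{X/M})$ quoted just before the proposition, together with the obvious monotonicity of $\mbox{acc}^{\alpha}$ with respect to set inclusion, it is enough to establish
$$\mbox{acc}^{\alpha}\bigl(\sigma_{*}(T_{M})\cup\sigma_{*}(T_{X/M})\bigr)\subset\mbox{acc}^{\alpha}\,\sigma_{*}(T_{M})\cup\mbox{acc}^{\alpha}\,\sigma_{*}(T_{X/M}).$$

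Second, I would prove by transfinite induction on $\alpha$ the general inclusion $\mbox{acc}^{\alpha}(A\cup B)\subset\mbox{acc}^{\alpha}A\cup\mbox{acc}^{\alpha}B$ for arbitrary $A,B\subset\mathbb{C}$. The case $\alpha=1$ is the standard fact $\mbox{acc}(A\cup B)=\mbox{acc}\,A\cup\mbox{acc}\,B$. The successor step $\alpha=\gamma+1$ combines this identity with the induction hypothesis applied to $\mbox{acc}\,A$ and $\mbox{acc}\,B$. For the limit step, using that both families $(\mbox{acc}^{\beta}A)_{\beta<\alpha}$ and $(\mbox{acc}^{\beta}B)_{\beta<\alpha}$ are decreasing, I would argue that if $x\in\bigcap_{\beta<\alpha}[\mbox{acc}^{\beta}A\cup\mbox{acc}^{\beta}B]$ but $x\notin\mbox{acc}^{\alpha}A$, then there exists $\beta_{0}<\alpha$ with $x\notin\mbox{acc}^{\beta_{0}}A$; by the decreasing property, $x\notin\mbox{acc}^{\beta}A$ for all $\beta_{0}\leq\beta<\alpha$, hence $x\in\mbox{acc}^{\beta}B$ for all such $\beta$, which by the decreasing property again forces $x\in\bigcap_{\beta<\alpha}\mbox{acc}^{\beta}B=\mbox{acc}^{\alpha}B$.

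Third, combining the two steps yields
$$\mbox{acc}^{\alpha}\,\sigma_{*}(T)\subset\mbox{acc}^{\alpha}\,\sigma_{*}(T_{M})\cup\mbox{acc}^{\alpha}\,\sigma_{*}(T_{X/M}),$$
and since $0$ lies in neither set on the right, it does not lie in the left one either, so another application of Theorem \ref{thmpgsigmaetoile} finishes the proof.

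The only non-trivial step is the limit-ordinal case of the set-theoretic identity, because intersection does not distribute over union in general; the decreasing character of iterated accumulation is what makes the argument work, and this is the point I would highlight carefully in the write-up.
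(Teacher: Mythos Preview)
Your argument is correct and is exactly the route the paper takes: the paper's entire proof is the one-line remark that the proposition follows from Theorem \ref{thmpgsigmaetoile} together with the inclusion $\sigma_{*}(T)\subset\sigma_{*}(T_{M})\cup\sigma_{*}(T_{X/M})$, and you have simply supplied the missing set-theoretic detail $\mbox{acc}^{\alpha}(A\cup B)\subset\mbox{acc}^{\alpha}A\cup\mbox{acc}^{\alpha}B$ by transfinite induction. Your handling of the limit case via the decreasing nature of the iterates is the right way to fill the only nontrivial gap in the paper's one-sentence justification.
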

\begin{thm}\label{thmpsolwik..2} Let $\alpha\geq1$ be a successor ordinal. $T\in L(X)$ is $g_{\sigma_{*}}^{\alpha}$-invertible iff there exists a pair of  $T$-invariant subspaces  $(M,N)$ such that $N$ is closed, $X=M \oplus N,$   $T_{M}$ is invertible and $\mbox{acc}^{\alpha-1}\,\sigma_{*}(T_{N})\subset\{0\}.$  
\end{thm}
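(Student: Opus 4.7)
The ($\Rightarrow$) direction is immediate from the definition of $g_{\sigma_{*}}^{\alpha}$-invertibility: if $T$ is $g_{\sigma_{*}}^{\alpha}$-invertible then, since $\alpha$ is a successor ordinal, there exists $(M,N) \in \mbox{Red}(T)$ with $T_M$ invertible and $\mbox{acc}^{\alpha-1}\,\sigma_{*}(T_N) \subset \{0\}$, and $(M,N)$ a fortiori satisfies the weaker requirement that only $N$ be closed.

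For the converse, suppose $(M,N)$ is a pair of $T$-invariant subspaces with $N$ closed, $X = M \oplus N$, $T_M$ bijective on $M$, and $\mbox{acc}^{\alpha-1}\,\sigma_{*}(T_N) \subset \{0\}$. The plan is to apply Proposition \ref{proporth} to the closed $T$-invariant subspace $N$. This reduces matters to proving that both $T_N$ and $T_{X/N}$ are $g_{\sigma_{*}}^{\alpha}$-invertible. The condition on $T_N$ is almost by hypothesis: the inclusion $\mbox{acc}^{\alpha-1}\,\sigma_{*}(T_N) \subset \{0\}$ yields $0 \notin \mbox{acc}^{\alpha}\,\sigma_{*}(T_N)$, so Theorem \ref{thmpgsigmaetoile} applies to $T_N$.

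The heart of the argument is showing that $T_{X/N}$ is actually invertible in $L(X/N)$. Let $q : X \to X/N$ denote the canonical quotient. Since $X = M \oplus N$ as vector spaces, the restriction $q|_M : M \to X/N$ is a linear bijection; and because $M$ is $T$-invariant, the identity $q \circ T = T_{X/N} \circ q$ restricts to the intertwining relation $T_{X/N} \circ q|_M = q|_M \circ T_M$. Since $T_M$ is a bijection of $M$, this forces $T_{X/N}$ to be a bijection of $X/N$. As $T_{X/N}$ is a bounded bijection of a Banach space, the open mapping theorem promotes this to invertibility in $L(X/N)$, which trivially implies $g_{\sigma_{*}}^{\alpha}$-invertibility. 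Applying Proposition \ref{proporth} then concludes that $T$ is $g_{\sigma_{*}}^{\alpha}$-invertible.

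The main obstacle is conceptual rather than computational: the hypothesis does not a priori put $M$ among the closed subspaces, so the pair $(M,N)$ is not a member of $\mbox{Red}(T)$ and the definition cannot be invoked directly. The workaround is to avoid working with $M$ as a subspace of $X$ altogether and instead transport the bijectivity of $T_M$ across the algebraic isomorphism $q|_M$ onto the Banach space $X/N$, where the open mapping theorem supplies the missing continuity of the inverse. After this transfer, the closed $T$-invariant subspace $N$ and Proposition \ref{proporth} do the remaining work.
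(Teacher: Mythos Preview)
Your proof is correct and follows essentially the same strategy as the paper's: both arguments show that $T_{X/N}$ is invertible (you via conjugation by the algebraic isomorphism $q|_{M}$, the paper by verifying injectivity and surjectivity directly) and then combine this with the hypothesis on $T_{N}$ using the inclusion $\sigma_{*}(T)\subset\sigma_{*}(T_{N})\cup\sigma_{*}(T_{X/N})$. The only cosmetic difference is that you invoke Proposition~\ref{proporth} as a black box, whereas the paper unpacks that same spectral inclusion inline to conclude $0\notin\mbox{acc}^{\alpha}\,\sigma_{*}(T)$ and then appeals to Theorem~\ref{thmpgsigmaetoile}.
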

\begin{proof}  Let   $(M,N)$ be a pair  of  subspaces  such that $N$ is closed, $X=M \oplus N,$   $T_{M}$ is invertible and $\mbox{acc}^{\alpha-1}\,\sigma_{*}(T_{N})\subset\{0\}.$  Let $x\in X$ such that $T_{X/N}(x+N)=N.$  Then $Tx \in N,$ and  as  $T_{M}$ is one-to-one then  $x\in N.$  If  $x\in X,$    then   the fact that $T_{M}$ is onto implies that     $T(y)=P_{M}(x)$ for some $y\in M;$ where $P_{M}$ is the projection of $X$ onto $M.$   Thus   $T_{X/N}(y+N)=x+N$  for all $x\in X.$   Therefore $T_{X/N}$ is invertible.  So  there exists $\epsilon>0$ such that $B(0,\epsilon)\setminus\{0\}\subset(\sigma(T_{X/N}))^{C}\cap(\mbox{acc}^{\alpha-1}\,\sigma_{*}(T_{N}))^{C}\subset\mbox{acc}^{\alpha-1}\,\sigma_{*}(T).$  This entails, by  Theorem \ref{thmpgsigmaetoile}, that  $T$ is $g_{\sigma_{*}}^{\alpha}$-invertible. The converse is clear.
\end{proof}

\section{Almost  invertible operators}
\begin{dfn} Let $T\in L(X).$ We say that $T$ is  almost invertible  if there exists  $(M,N)\in \mbox{Red}(T)$   such that $T_{M}$  is invertible and $\sigma(T_{N})$ is  at most countable.   The set of   almost invertible operators  is  denoted by  $\mathcal{G}(X).$    
\end{dfn}
\begin{rema}\label{remasakold2} (i) Let $T\in L(X).$ Then  $\sigma(T)$ is at most countable iff $\sigma_{*}(T)$ is at most countable.\\
(ii)  We can easily see from  \cite[Theorem I.2.11]{mullerbook} that if  $T,S\in L(X)$  are commuting operators with $\sigma(T)$ and  $\sigma(S)$ are at most countable,  then $\sigma(T+S)$ and $\sigma(TS)$ are   at most countable.
\end{rema}
  Denote in the sequel by $\sigma_{al}(T)=\{\lambda \in \C : T- \lambda I \text{ is not almost invertible}\}.$ As a consequence of the previous remark  and the fact that $\mbox{acc}\,\sigma(T)\subset \sigma_{*}(T),$ we obtain  the following  result, which characterizes    almost invertible operators in terms  of   $\mbox{acc}^{\omega_{1}}\,\sigma_{*}(T).$ 
\begin{thm}\label{corcent}  Let   $T\in L(X)$ and let  $r_{*}:=\mbox{CBR}(\sigma_{*}(T)).$ Then  $T$ is almost invertible iff $T$ is  $g_{\sigma_{*}}^{\omega_{1}}$-invertible iff $T$ is  $g_{\sigma_{*}}^{r_{*}}$-invertible.  Consequently, $\displaystyle\mathcal{G}(X)=\mathcal{G}_{\sigma_{*}}^{\omega_{1}}(X)=\mathcal{G}^{r_{*}}_{\sigma_{*}}(X)$ and  $\displaystyle\sigma_{al}(T)=\mbox{acc}^{\omega_{1}}\,\sigma_{*}(T)=\mbox{acc}^{r_{*}}\,\sigma_{*}(T).$
\end{thm}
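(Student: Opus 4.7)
The plan is to establish the three-way equivalence by splitting it into two independent parts: (a) the equivalence between $g_{\sigma_{*}}^{\omega_{1}}$-invertibility and $g_{\sigma_{*}}^{r_{*}}$-invertibility, which is a pure ordinal-stabilization argument built on Theorem \ref{thmpgsigmaetoile}; and (b) the equivalence between almost invertibility and $g_{\sigma_{*}}^{\omega_{1}}$-invertibility, which rests on the classical Cantor--Bendixson characterization of countable compact subsets of $\mathbb{C}$. The ``$\sigma_{al}$'' part of the conclusion will then follow by applying the pointwise equivalence to $T-\lambda I$ for every scalar $\lambda.$

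For (a): by the discussion preceding Theorem \ref{thmpgsigmaetoile}, $r_{*}:=\mbox{CBR}(\sigma_{*}(T))<\omega_{1}$ (this is where \cite[Theorem 4.9]{kechrisbbook} enters), and $\mbox{acc}^{\alpha}\,\sigma_{*}(T)=\mbox{acc}^{r_{*}}\,\sigma_{*}(T)$ for every ordinal $\alpha\geq r_{*};$ in particular for $\alpha=\omega_{1}.$ Since Theorem \ref{thmpgsigmaetoile} at $\mu=0$ asserts that $T$ is $g_{\sigma_{*}}^{\alpha}$-invertible iff $0\notin\mbox{acc}^{\alpha}\,\sigma_{*}(T),$ the equivalence is immediate.

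For (b), $\Leftarrow:$ suppose $T$ is $g_{\sigma_{*}}^{\omega_{1}}$-invertible. Since $\omega_{1}$ is a limit ordinal, Definition 2.2(2) gives a successor ordinal $\beta<\omega_{1}$ with $T$ $g_{\sigma_{*}}^{\beta}$-invertible, so there exists $(M,N)\in\mbox{Red}(T)$ with $T_{M}$ invertible and $\mbox{acc}^{\beta-1}\,\sigma_{*}(T_{N})\subset\{0\}.$ Using the disjoint decomposition recalled in the introduction,
$$\sigma_{*}(T_{N})=\mbox{acc}^{\beta-1}\,\sigma_{*}(T_{N})\bigsqcup\bigl[\bigsqcup_{\gamma<\beta-1}\mbox{iso}\,(\mbox{acc}^{\gamma}\,\sigma_{*}(T_{N}))\bigr],$$
the first piece has at most one point, and the second is a union indexed by a countable ordinal $\beta-1<\omega_{1}$ of sets of isolated points of subsets of $\mathbb{C};$ each such set is countable by separability, and a countable union of countable sets is countable. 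Hence $\sigma_{*}(T_{N})$ is at most countable, and so is $\sigma(T_{N})$ by Remark \ref{remasakold2}(i). Therefore $T$ is almost invertible. For $\Rightarrow:$ let $(M,N)\in\mbox{Red}(T)$ witness almost invertibility. Then $\sigma(T_{N})$ is at most countable, hence so is $\sigma_{*}(T_{N})$ by Remark \ref{remasakold2}(i). As a countable compact subset of $\mathbb{C},$ $\sigma_{*}(T_{N})$ is Polish with countable Cantor--Bendixson rank, and the classical Cantor--Bendixson result forces $\mbox{acc}^{\gamma}\,\sigma_{*}(T_{N})=\emptyset$ for some successor $\gamma<\omega_{1}.$ Theorem \ref{thmpgsigmaetoile} applied to $T_{N}$ yields that $T_{N}$ is $g_{\sigma_{*}}^{\gamma}$-invertible, while $T_{M}$ is invertible, hence $g_{\sigma_{*}}^{0}$- and \emph{a fortiori} $g_{\sigma_{*}}^{\gamma}$-invertible by the inclusions of Remark 2.3(1). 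Corollary 2.5(ii) then gives that $T=T_{M}\oplus T_{N}$ is $g_{\sigma_{*}}^{\gamma}$-invertible, so in particular $g_{\sigma_{*}}^{\omega_{1}}$-invertible.

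The final set-level and spectrum-level statements are then routine: $\mathcal{G}(X)=\mathcal{G}_{\sigma_{*}}^{\omega_{1}}(X)=\mathcal{G}_{\sigma_{*}}^{r_{*}}(X)$ is a direct reformulation of the pointwise equivalences, and for $\sigma_{al}(T)$ one applies the three-way equivalence to $T-\lambda I$ for every $\lambda\in\mathbb{C},$ noting that $\mbox{CBR}(\sigma_{*}(T-\lambda I))=r_{*}$ since translation by a scalar preserves the Cantor--Bendixson rank, and invoking Theorem \ref{thmpgsigmaetoile} at $\mu=\lambda.$ The main obstacle is the $\Rightarrow$ direction of (b): one must transfer ``$\sigma(T_{N})$ countable'' into a statement about some specific countable CB derivative of $\sigma_{*}(T_{N}),$ which is exactly the classical Polish-space fact that a countable compact set has countable CB rank with eventually empty derivative. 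Once this is in hand, the rest is bookkeeping with Theorem \ref{thmpgsigmaetoile}, Corollary 2.5(ii), and Remark \ref{remasakold2}(i).
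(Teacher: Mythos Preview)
Your proof is correct and rests on the same core fact the paper invokes: a compact subset $A\subset\mathbb{C}$ is at most countable iff $\mbox{acc}^{r}A=\emptyset$ for $r=\mbox{CBR}(A)$ (the Cantor--Bendixson theorem). The paper's own proof is a one-line appeal to this fact, so your argument is essentially an expanded version of it, with the stabilization at $r_{*}$ and the translation to $g_{\sigma_{*}}^{\alpha}$-invertibility via Theorem~\ref{thmpgsigmaetoile} made explicit.

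One minor simplification: in the $\Rightarrow$ direction of (b) you do not need the detour through Theorem~\ref{thmpgsigmaetoile} applied to $T_{N}$ and Corollary~2.5(ii). Once you know $\mbox{acc}^{\gamma}\sigma_{*}(T_{N})=\emptyset\subset\{0\}$ for some successor $\gamma<\omega_{1}$, the very pair $(M,N)$ you started with already witnesses, by Definition~2.2(1), that $T$ is $g_{\sigma_{*}}^{\gamma+1}$-invertible, hence $g_{\sigma_{*}}^{\omega_{1}}$-invertible. This is closer in spirit to the paper's terse treatment, but your longer route is not wrong.
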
 
\begin{proof} The proof is a consequence of the fact that a compact    complex  subset  $A$ is at most countable iff $\mbox{acc}^{r}\,A=\emptyset;$ where $r:=\mbox{CBR}(A)$  is a successor ordinal in the case of $A$ is not empty. 
\end{proof}
  From this proposition, it follows   that $\sigma_{al}(T)$  is compact and  $$\sigma_{*}(T)\setminus\sigma_{al}(T)=\bigsqcup_{\beta<\omega_{1}}\mbox{iso}\,(\mbox{acc}^{\beta}\,\sigma_{*}(T)).$$
For  an  almost invertible operator  $T\in L(X),$ denote in the sequel by: $$\mathcal{D}_{al}(T):=\{(M,N)\in \mbox{Red}(T) \text{ such that } T_{M}  \text{ is invertible and  } \sigma(T_{N})   \text{  is at most countable}\}.$$
Using  Theorems \ref{thmp3}, \ref{thmpsolwik..2}, \ref{corcent},   we obtain  the following theorem:   
\begin{thm}\label{thmp4}        The  following statements  hold  for every   $T \in L(X)$:\\
(i) $T$  is almost invertible;\\
(ii) $T$   is  $g_{\sigma_{*}}^{\omega_{1}}$-invertible;\\
(iii) $0\notin \mbox{acc}^{\omega_{1}}\,\sigma_{*}(T);$\\
(iv)  There exists $(M,N)\in \mbox{Red}^{2}(T)\cap \mathcal{D}_{al}(T);$\\
(v) There exists a pair of  $T$-invariant subspaces $(M,N)$ such that $N$ is closed, $X=M \oplus N,$   $T_{M}$ is invertible and $\sigma_{*}(T_{N})$ is at most countable;\\  
(vi) There exists  an at most countable spectral set $\tilde{\sigma}$   of $T$ such that  $0\notin \sigma(T)\setminus\tilde{\sigma};$\\
(vii) There exists  a spectral set $\tilde{\sigma}$   of $T$ such that  $0\notin \sigma(T)\setminus\tilde{\sigma}$    and  $\tilde{\sigma}\setminus\{0\}\subset \displaystyle\bigsqcup_{\beta<\omega_{1}}\mbox{iso}\,(\mbox{acc}^{\beta}\,\sigma(T));$\\
(viii) There exists a bounded projection $P\in \mbox{comm}(T)$    such that  $T+P$ is almost invertible    and  $\sigma_{*}(TP)$ is at most countable;\\
(ix) There exists a bounded projection $P\in \mbox{comm}^{2}(T)$    such that  $T+P$ is almost invertible       and  $\sigma_{*}(TP)$  is at most countable;\\
(x)  There exists   $S \in \mbox{comm}(T)$ such that   $S^{2}T = S$ and  $\sigma_{*}(T-T^{2}S)$  is at most countable;\\  
(xi)  There exists   $S \in \mbox{comm}^{2}(T)$ such that   $S^{2}T = S$ and  $\sigma_{*}(T-T^{2}S)$  is at most countable.
\end{thm}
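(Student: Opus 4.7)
My plan is to deduce every equivalence from the already-established Theorems \ref{thmp3}, \ref{thmpsolwik..2}, \ref{corcent}, using the following translation lemma that I would establish inline: for any compact subset $A\subset\mathbb{C}$, $A$ is at most countable iff there exists an ordinal $\gamma<\omega_{1}$ with $\mbox{acc}^{\gamma}\,A\subset\{0\}$. The forward direction takes $\gamma=\mbox{CBR}(A)$ (which is a successor ordinal $<\omega_{1}$ when $A$ is countable, since by compactness the derived sequence cannot reach $\emptyset$ at a limit stage); the converse uses the decomposition $A=\mbox{acc}^{\gamma}\,A\sqcup\bigsqcup_{\beta<\gamma}\mbox{iso}\,(\mbox{acc}^{\beta}\,A)$ from the Introduction together with the fact that each $\mbox{iso}\,(\mbox{acc}^{\beta}\,A)$ is at most countable.

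The equivalences (i)$\Leftrightarrow$(ii)$\Leftrightarrow$(iii) are immediate from Theorem \ref{corcent}. Each of (iv)--(vi) and (viii)--(xi) is obtained from the corresponding item in Theorem \ref{thmp3} (respectively Theorem \ref{thmpsolwik..2} for (v)) by the following two-way transfer. Going from (i): by Theorem \ref{corcent}, $T$ is $g_{\sigma_{*}}^{r_{*}}$-invertible for $r_{*}:=\mbox{CBR}(\sigma_{*}(T))$, and the corresponding item of Theorem \ref{thmp3} yields a witness whose relevant $\sigma_{*}$ satisfies $\mbox{acc}^{r_{*}-1}\,\sigma_{*}(\cdot)\subset\{0\}$, which by the translation lemma is at most countable (Remark \ref{remasakold2}(i) freely replaces $\sigma_{*}$ by $\sigma$ when needed). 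Going back to (i): any witness in (iv)--(vi) or (viii)--(xi) has its relevant $\sigma_{*}$ at most countable, so by the translation lemma $\mbox{acc}^{\gamma}\,\sigma_{*}(\cdot)\subset\{0\}$ for some $\gamma<\omega_{1}$; taking $\alpha=\gamma+1$ and invoking the corresponding implication of Theorem \ref{thmp3} (or \ref{thmpsolwik..2}) yields $g_{\sigma_{*}}^{\alpha}$-invertibility with $\alpha<\omega_{1}$, hence almost invertibility by Theorem \ref{corcent}.

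The one statement that is not a direct transcription of an earlier theorem is (vii), whose proof I expect to be the main obstacle. Here I would show by transfinite induction that for every clopen $\tilde{\sigma}\subset\sigma(T)$ and every ordinal $\beta$, $\mbox{acc}^{\beta}\,\tilde{\sigma}=\mbox{acc}^{\beta}\,\sigma(T)\cap\tilde{\sigma}$: the successor step uses that $\tilde{\sigma}$ is open in $\sigma(T)$, so any accumulating sequence in $\sigma(T)$ converging to a point of $\tilde{\sigma}$ is eventually in $\tilde{\sigma}$; the limit step is a routine intersection. An immediate consequence is that for $\lambda\in\tilde{\sigma}$ one has $\lambda\in\mbox{iso}\,(\mbox{acc}^{\beta}\,\sigma(T))$ iff $\lambda\in\mbox{iso}\,(\mbox{acc}^{\beta}\,\tilde{\sigma})$. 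The condition in (vii) then reads $\tilde{\sigma}\setminus\{0\}\subset\bigsqcup_{\beta<\omega_{1}}\mbox{iso}\,(\mbox{acc}^{\beta}\,\tilde{\sigma})$, i.e., $\mbox{acc}^{\omega_{1}}\,\tilde{\sigma}\subset\{0\}$, which by the translation lemma applied to $\tilde{\sigma}$ itself is equivalent to $\tilde{\sigma}$ being at most countable, giving (vi)$\Leftrightarrow$(vii).
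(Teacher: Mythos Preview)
Your proposal is correct and is precisely the explicit version of what the paper does: the paper's entire proof reads ``Using Theorems \ref{thmp3}, \ref{thmpsolwik..2}, \ref{corcent}, we obtain the following theorem,'' and your translation lemma together with the item-by-item transfer is the natural way to unpack that sentence. One small imprecision: in the forward direction you invoke Theorem \ref{thmp3} at $\alpha=r_{*}=\mbox{CBR}(\sigma_{*}(T))$ and write $\mbox{acc}^{r_{*}-1}$, but $r_{*}$ need not be a successor (and Theorem \ref{thmp3} is stated only for successor $\alpha\geq 1$); you should instead take any successor $\alpha<\omega_{1}$ furnished by the definition of $g_{\sigma_{*}}^{\omega_{1}}$-invertibility, and for items (viii)/(ix) take $\alpha-1$ at least the maximum of the two witnessing ordinals (one for $T+P$, one for $TP$).
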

Recall that an operator $R \in L(X)$ is called Riesz if $\sigma_{b}(R)\subset\{0\}.$
\begin{cor}   Let $T \in L(X).$   The  following statements  hold:\\
(i)  $T$  is almost invertible iff   $T^{*}$  is almost invertible.\\
(ii) If $S$ is a bounded operator acts on a complex Banach space $Y,$ then  $T\oplus S$   is almost invertible iff both $T$ and $S$   are    almost invertible. \\
 (iii)   $T$  is  almost invertible  iff   $T^{m}$  is  almost invertible  for some  (equivalently for every) integer $m\geq 1.$\\
(iv)  Let   $M$ be  a closed $T$-invariant subspace. If both   $T_{M}$ and $T_{X/M}$ are  almost invertible, then $T$ is almost invertible.\\
(v) If  $R\in \mbox{comm}(T)$ is a Riesz operator,  then $\sigma_{al}(T+R)=\sigma_{al}(T).$
\end{cor}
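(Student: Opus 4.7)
The plan is to exploit the characterizations $\mathcal{G}(X) = \mathcal{G}_{\sigma_*}^{\omega_1}(X)$ and $\sigma_{al}(T) = \mbox{acc}^{\omega_1}\,\sigma_*(T)$ furnished by Theorem \ref{corcent}, so that almost invertibility can be handled entirely through the $g_{\sigma_*}^{\omega_1}$-invertibility machinery already developed in the previous section.

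For parts (i), (ii), and (iii), I would simply invoke the corollary following Theorem \ref{thmpgsigmaetoile} with $\alpha = \omega_1$: it already records that $g_{\sigma_*}^{\alpha}$-invertibility is preserved under passing to the dual adjoint, that it distributes over direct sums $S \oplus T$, and that it is equivalent for $T$ and for any positive power $T^m$. Combining each equivalence with Theorem \ref{corcent} immediately translates these into the corresponding statements for almost invertibility.

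For (iv), the argument is Proposition \ref{proporth} applied at the ordinal $\alpha = \omega_1$: if both $T_M$ and $T_{X/M}$ belong to $\mathcal{G}_{\sigma_*}^{\omega_1}(X)$, then so does $T$, and Theorem \ref{corcent} converts this conclusion back into almost invertibility. No additional work is needed beyond invoking the already-proved proposition at this ordinal.

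For (v), my strategy is to work with the Browder spectrum rather than the full spectrum, since Theorem \ref{corcent} permits writing $\sigma_{al}(T) = \mbox{acc}^{\omega_1}\,\sigma_b(T)$. I would then invoke the classical stability result that a Riesz operator $R$ commuting with $T$ leaves the Browder spectrum invariant, that is, $\sigma_b(T+R) = \sigma_b(T)$; applying $\mbox{acc}^{\omega_1}$ to both sides and shifting by $\lambda$ gives $\sigma_{al}(T+R) = \sigma_{al}(T)$. The main obstacle here, such as it is, is locating/justifying the stability $\sigma_b(T+R) = \sigma_b(T)$ under commuting Riesz perturbations, but this is a standard fact from Fredholm theory (the Fredholm spectrum and the accumulation part of the spectrum are both preserved by commuting Riesz operators). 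Everything else is a bookkeeping exercise using the results collected in Section 2.
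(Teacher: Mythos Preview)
Your proposal is correct and matches the paper's implicit reasoning: the corollary is stated without proof precisely because parts (i)--(iv) follow from Theorem \ref{corcent} together with the corollary after Theorem \ref{thmpgsigmaetoile} and Proposition \ref{proporth} applied at $\alpha=\omega_1$, while (v) follows from $\sigma_{al}(T)=\mbox{acc}^{\omega_1}\,\sigma_b(T)$ and the standard stability $\sigma_b(T+R)=\sigma_b(T)$ under commuting Riesz perturbations. There is nothing to add.
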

\begin{prop} Let  $T,R\in L(X)$ be      almost  invertible operators such that $TR=RT=0.$ Then $T+R$ is   almost invertible.
\end{prop}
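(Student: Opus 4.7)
The plan is to verify characterization (x) of Theorem \ref{thmp4} for $T+R$ directly, by producing $U\in\mbox{comm}(T+R)$ with $U^{2}(T+R)=U$ and $\sigma_{*}\bigl((T+R)-(T+R)^{2}U\bigr)$ at most countable. Using the stronger version (xi) applied separately to $T$ and $R$, I would pick $S\in\mbox{comm}^{2}(T)$ with $S^{2}T=S$ and $\sigma_{*}(T-T^{2}S)$ at most countable, and $S'\in\mbox{comm}^{2}(R)$ with $(S')^{2}R=S'$ and $\sigma_{*}(R-R^{2}S')$ at most countable. The candidate will simply be $U:=S+S'$.

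The next step is to extract the commutation relations forced by $TR=RT=0$. Since $R$ commutes with $T$, the double-commutant property gives $SR=RS$; combined with $S=S^{2}T$ and $TR=0$, this yields $SR=S^{2}TR=0$, and symmetrically $RS=0$. Swapping the roles of $T$ and $R$ one obtains $TS'=S'T=0$, and since $S'\in\mbox{comm}(T)$, the double-commutant property of $S$ forces $SS'=S'S$. With these identities in hand, $U(T+R)$ and $(T+R)U$ both reduce to $ST+S'R$, so $U\in\mbox{comm}(T+R)$. Expanding $U^{2}(T+R)$, every cross term (namely $S^{2}R$, $SS'T$, $SS'R$, $(S')^{2}T$) vanishes, leaving $S^{2}T+(S')^{2}R=S+S'=U$. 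A parallel computation gives $(T+R)^{2}=T^{2}+R^{2}$ and $(T+R)^{2}U=T^{2}S+R^{2}S'$, so
$$(T+R)-(T+R)^{2}U=(T-T^{2}S)+(R-R^{2}S').$$

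The final step is to control the spectrum of this sum. Its two summands commute (since $T,R,S,S'$ pairwise commute), and each has at most countable spectrum by the choice of $S,S'$ together with Remark \ref{remasakold2}(i). Remark \ref{remasakold2}(ii) then yields that the sum has at most countable spectrum, and Theorem \ref{thmp4}(x) applied to $T+R$ with $U$ concludes that $T+R$ is almost invertible. The main obstacle is purely the bookkeeping of commutation relations: one needs every cross product to collapse, and this is exactly where working with the double commutants in (xi), instead of plain commutants as in (x), pays off — it guarantees that $S$ commutes with $R$ and with $S'$, and that $S'$ commutes with $T$, which is what annihilates the unwanted terms.
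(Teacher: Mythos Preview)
Your argument is correct and follows essentially the same route as the paper: both pick $S\in\mbox{comm}^{2}(T)$ and $S'\in\mbox{comm}^{2}(R)$ from the characterization (xi), set $U=S+S'$, and verify the algebraic identities needed for Theorem~\ref{thmp4} together with Remark~\ref{remasakold2}(ii) to control the spectrum of the sum. Your write-up is in fact more careful than the paper's, which leaves the vanishing of the cross terms (via $SR=RS=0$, $TS'=S'T=0$, $SS'=S'S=0$) implicit and contains a couple of evident typos.
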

\begin{proof}  Let $S\in \mbox{comm}^{2}(T)$ and $S^{'}\in \mbox{comm}^{2}(R)$ such that $S^{2}T = S,$ $(S^{'})^{2}R = S^{'},$      $\sigma_{*}(T-T^{2}S)$ and  $\sigma_{*}(R-R^{2}S^{'})$  are  at most countable. It is easy to see that  $(S+S^{'})\in \mbox{comm}^{2}(T),$ $(S+S^{'})^{2}(T+R)=S^{2}T+(S^{'})^{2}T=S+S^{'}$ and $(T+R)-(T+R)^{2}(S+S^{'})=T-T^{2}S+R-R^{2}S^{'}.$  We conclude from Remark \ref{remasakold2} and Theorem \ref {thmp4} that  $T+R$ is almost countable.
\end{proof}
According to \cite{Grabiner},  an operator  $T$ is said to have a   topological uniform descent  if  $\R(T) + \mathcal{N}(T^{d})$  is closed for some integer $d$ and  $\R(T) + \mathcal{N}(T^{n}) = \R(T) + \mathcal{N}(T^{n})$ for all $n \geq d.$   For the definition of the SVEP, one can   see \cite{Jiang-zhong}.
\begin{prop}  Let $T \in L(X).$  The following assertions hold:\\
(i) If $0\notin \mbox{acc}^{\omega_{1}}\,\sigma(T),$ then  $T$ and  its dual adjoint  $T^{*}$  have the SVEP at $0.$ \\
(ii) If  $T$ has a topological uniform descent, then $T$ is Drazin invertible iff $T$ is  almost invertible. 
\end{prop}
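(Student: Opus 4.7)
The plan is to reduce both parts to the main structure theorem \ref{thmp4} and then invoke two classical facts from local spectral theory.

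For part (i), I would begin by applying Theorem \ref{thmp4}: the hypothesis $0\notin \mbox{acc}^{\omega_{1}}\,\sigma(T)$ is equivalent to $T$ being almost invertible, so it furnishes a pair $(M,N)\in \mbox{Red}(T)$ with $T_{M}$ invertible and $\sigma(T_{N})$ at most countable. Since SVEP at a given point passes to and from direct summands, it is enough to establish SVEP at $0$ for $T_{M}$ and for $T_{N}$ separately, and then dualize. For $T_{M}$ this is automatic because $0\notin\sigma(T_{M})$. For $T_{N}$ I would invoke the classical fact that an operator whose spectrum is an at most countable compact subset of $\mathbb{C}$ has SVEP at every point (any such spectrum is scattered and has empty interior, so SVEP follows from the identity theorem for analytic functions). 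Passing to the dual, $T^{*}=T_{M}^{*}\oplus T_{N}^{*}$ and $\sigma(T_{N}^{*})=\sigma(T_{N})$ remains at most countable, so the same argument yields SVEP at $0$ for $T^{*}$.

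For part (ii), the easier implication is immediate: if $T$ is Drazin invertible, then $T=D\oplus Q$ with $D$ invertible and $Q$ nilpotent, so $\sigma(Q)\subset\{0\}$ is finite and $T$ is almost invertible by the definition of Section 3. For the non-trivial direction, assume $T$ has topological uniform descent and is almost invertible. By part (i) (applied to the equivalent condition $0\notin\mbox{acc}^{\omega_{1}}\,\sigma(T)$ coming from Theorem \ref{thmp4}), both $T$ and $T^{*}$ have SVEP at $0$. At this point I would invoke the known consequences of topological uniform descent: for an operator with topological uniform descent at $0$, SVEP of $T$ at $0$ forces finite ascent, and SVEP of $T^{*}$ at $0$ forces finite descent (Grabiner's results, as reformulated in Aiena's framework). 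Together these give finite ascent and finite descent at $0$, which is exactly Drazin invertibility.

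The principal obstacle is not a new computation but making sure the two classical inputs are cited and applied correctly: namely, (a) that a countable spectrum entails SVEP at every point of the spectrum, and (b) that topological uniform descent, combined with SVEP of $T$ (respectively $T^{*}$) at $0$, entails finite ascent (respectively finite descent) at $0$. Both are standard in the local spectral theory literature, and the only care needed is to verify that the hypotheses used here match the statements quoted: topological uniform descent is assumed at $0$, and SVEP is precisely at the point $0$, which is what parts (a) and (b) require.
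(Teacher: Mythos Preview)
Your proposal is correct and follows essentially the same route as the paper. The paper's own proof is extremely terse---it merely says that (i) follows from Theorem~\ref{corcent} and that (ii) follows from \cite[Theorems~3.2,~3.4]{Jiang-zhong}---and your argument is precisely the unpacking of those citations: the decomposition from Theorem~\ref{thmp4} (equivalently Theorem~\ref{corcent}) gives an invertible summand plus a summand with countable spectrum, whence SVEP for $T$ and $T^{*}$ at $0$; and then the Jiang--Zhong results you describe (TUD plus SVEP of $T$ at $0$ forces finite ascent, TUD plus SVEP of $T^{*}$ at $0$ forces finite descent) are exactly the content of the cited Theorems~3.2 and~3.4, yielding Drazin invertibility.
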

\begin{proof}  The point (i) is a consequence of   Theorem  \ref{corcent}, and  the point (ii)   is a consequence of   \cite[Theorems 3.2, 3.4]{Jiang-zhong}.
\end{proof}
In the next corollary, we denote, respectively,  the essential spectrum   and the B-Fredholm  spectrum of $T\in L(X)$  by   $\sigma_{e}(T)$ and  $\sigma_{bf}(T).$   For the  definition  and the properties of the B-Fredholm spectrum, one can see \cite{aznay-ouahab-zariouh,berkaniBF}.
\begin{cor}    $\sigma_{b}(T)=\sigma_{e}(T)\cup \mbox{acc}^{\omega_{1}}\,\sigma_{*}(T)$
and   $\sigma_{d}(T)=\sigma_{bf}(T)\cup\mbox{acc}^{\omega_{1}}\,\sigma_{*}(T),$ for all   $T \in L(X).$ 
\end{cor}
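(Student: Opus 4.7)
\emph{Proof plan.} I would prove both identities in parallel, converting the hypothesis $\lambda\notin\mbox{acc}^{\omega_{1}}\,\sigma_{*}(T)$ into an almost invertible decomposition via Theorem \ref{corcent} and then invoking punctured neighbourhood theorems (Kato for Fredholm, Berkani for B-Fredholm). Since $\omega_{1}>\omega,$ the three choices of $\sigma_{*}$ give the same set $\mbox{acc}^{\omega_{1}}\,\sigma_{*}(T),$ so the right-hand sides do not depend on $\sigma_{*}.$ The easy inclusions $\sigma_{e}(T)\cup\mbox{acc}^{\omega_{1}}\,\sigma_{*}(T)\subset\sigma_{b}(T)$ and $\sigma_{bf}(T)\cup\mbox{acc}^{\omega_{1}}\,\sigma_{*}(T)\subset\sigma_{d}(T)$ follow at once from $\mbox{acc}^{\omega_{1}}\,\sigma_{*}(T)\subset\mbox{acc}\,\sigma(T)$ and from the fact that any non-isolated point of $\sigma(T)$ must lie in both $\sigma_{b}(T)$ and $\sigma_{d}(T)$ (otherwise $T-\lambda I$ would be Drazin invertible, making $\lambda$ isolated in $\sigma(T)$).

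For the converse, fix $\lambda\notin\mbox{acc}^{\omega_{1}}\,\sigma_{*}(T);$ by Theorem \ref{corcent} there exists $(M,N)\in\mbox{Red}(T)$ with $(T-\lambda I)_{M}$ invertible and $\sigma((T-\lambda I)_{N})$ at most countable. Since Fredholmness and B-Fredholmness both pass to reducing summands when one block is invertible (a direct check from the definitions, using that $(T-\lambda I)_{M}$ is invertible), the extra assumption $\lambda\notin\sigma_{e}(T)$ (resp.\ $\lambda\notin\sigma_{bf}(T)$) forces $(T-\lambda I)_{N}$ to be Fredholm (resp.\ B-Fredholm). I then apply Kato's punctured neighbourhood theorem (resp.\ Berkani's analogue for B-Fredholm operators): there is $\epsilon>0$ such that for $0<|z|<\epsilon,$ $(T-\lambda I-zI)_{N}$ is Fredholm with constant nullity $k$ and defect $d.$ If either $k$ or $d$ were positive, the entire punctured disc $\{z:0<|z|<\epsilon\}$ would be contained in $\sigma((T-\lambda I)_{N}),$ contradicting its countability. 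Hence $k=d=0,$ so $0$ is isolated in $\sigma((T-\lambda I)_{N})$; consequently $(T-\lambda I)_{N}$ is Browder (resp.\ Drazin invertible), and taking the direct sum with the invertible block shows that $T-\lambda I$ itself is Browder (resp.\ Drazin invertible), i.e.\ $\lambda\notin\sigma_{b}(T)$ (resp.\ $\lambda\notin\sigma_{d}(T)$).

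The main obstacle is the correct deployment of the punctured neighbourhood theorems, in particular Berkani's B-Fredholm version asserting that a B-Fredholm operator becomes Fredholm on a punctured disc around $0.$ Once this is in hand, the countability of $\sigma((T-\lambda I)_{N})$ delivered by Theorem \ref{corcent} rules out the only other possibility and pins down $0$ as an isolated point of that spectrum; the remaining direct-sum bookkeeping for Fredholm, B-Fredholm, Browder and Drazin invertibility is entirely routine.
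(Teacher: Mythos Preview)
Your argument is correct, but it takes a different path from the one the paper has in mind. The corollary is placed immediately after the proposition asserting that (i) $0\notin\mbox{acc}^{\omega_{1}}\,\sigma(T)$ forces SVEP for $T$ and $T^{*}$ at $0$, and (ii) for operators with topological uniform descent, almost invertibility coincides with Drazin invertibility. The intended one--line proof is: Fredholm and B-Fredholm operators have topological uniform descent, so if $\lambda\notin\sigma_{e}(T)\cup\mbox{acc}^{\omega_{1}}\,\sigma_{*}(T)$ (resp.\ $\lambda\notin\sigma_{bf}(T)\cup\mbox{acc}^{\omega_{1}}\,\sigma_{*}(T)$), then $T-\lambda I$ is almost invertible with topological uniform descent, hence Drazin invertible by the proposition; combined with Fredholmness this gives Browder. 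Equivalently one may use part (i) together with the standard implications ``Fredholm $+$ SVEP for $T$ and $T^{*}$ $\Rightarrow$ Browder'' and ``B-Fredholm $+$ SVEP for $T$ and $T^{*}$ $\Rightarrow$ Drazin invertible''.

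Your route avoids the SVEP/topological uniform descent machinery entirely: you pass to the block $(T-\lambda I)_{N}$ with at most countable spectrum, apply the punctured--neighbourhood theorems of Kato and Berkani there, and let countability kill the possibility of positive nullity or defect on the punctured disc. This is a legitimate and self--contained alternative; it makes the role of the countable spectrum completely explicit, at the cost of re-proving by hand what the preceding proposition packages via \cite{Jiang-zhong}. One small simplification you could make: once you know $0$ is isolated in $\sigma((T-\lambda I)_{N})$ and $(T-\lambda I)_{M}$ is invertible, it follows that $\lambda\in\mbox{iso}\,\sigma(T)\cup\rho(T)$, so you can conclude Browder/Drazin invertibility of $T-\lambda I$ directly from ``Fredholm (resp.\ B-Fredholm) $+$ isolated spectral point'' without first arguing it on the summand $N$.
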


\goodbreak 
{\small  \noindent  Zakariae Aznay,\\  Laboratory (L.A.N.O), Department of Mathematics,\\Faculty of Science, Mohammed I University,\\  Oujda 60000 Morocco.\\
aznay.zakariae@ump.ac.ma\\

 \noindent Abdelmalek Ouahab,\newline Laboratory (L.A.N.O), Department of
	Mathematics,\newline Faculty of Science, Mohammed I University,\\
	  Oujda 60000 Morocco.\\
	  ouahab05@yahoo.fr\\

\noindent   Hassan  Zariouh,\newline Department of
Mathematics (CRMEFO),\newline
   and laboratory (L.A.N.O), Faculty of Science,\newline
  Mohammed I University, Oujda 60000 Morocco.\\
   h.zariouh@yahoo.fr

\end{document}